\numberwithin{equation}{section}
\newcommand{\R}{\mathbb{R}}
\newcommand{\D}{\mathbb{D}}
\newcommand{\Cm}{\mathbb{C}}
\newcommand{\intt}{\int\limits}
\newcommand{\summ}{\sum\limits}
\newcommand{\eps}{\varepsilon}
 \DeclareMathOperator{\dist}{dist}
\renewcommand{\phi}{\varphi}
\newtheorem{Thm}{Theorem}[section]
\newtheorem{theorem}[Thm]{Theorem}
\newtheorem{lemma}[Thm]{Lemma}
\newtheorem{remark}[Thm]{Remark}
\newtheorem{definition}{Definition}
\begin{document}


\baselineskip=17pt


\title[On the Heins Theorem]
{On the Heins Theorem}
\author{Aleksei Kulikov}
\address{Aleksei Kulikov,
\newline Department of Mathematics and Computer Science St.~Petersburg State University, St. Petersburg, Russia,
\newline {\tt lyosha.kulikov@mail.ru}
}
\thanks{2010 {\it Mathematics Subject Classification:} Primary 30D15; Secondary 30E20, 31A15.}
\thanks{{\it Key words and phrases:} growth of entire functions, harmonic measure, extremal length}
\thanks{ The work was supported by the Russian Science Foundation grant 17-11-01064.}
\date{}

\begin{abstract} { It is known that the famous {Heins Theorem (also known as the de Branges Lemma)} about the minimum of two entire functions of minimal type
does not extend to functions of finite exponential type. We study in detail pairs of entire functions $f, g$ of finite exponential type satisfying
$\sup_{z\in\mathbb{C}}\min\{|f(z)|,|g(z)|\}<\infty.$ It turns out that $f$ and $g$ have to be bounded on some {\it rotating half-planes}. We also obtain very close upper and lower bounds for possible rotation functions of these half-planes.
}
\end{abstract}

\maketitle

\section{Introduction { and main results}}

{ If two entire functions $f$ and $g$ of minimal exponential type are such that 
\begin{equation}
\sup_{z\in\mathbb{C}}\min\{|f(z)|, |g(z)|\}\leq 1,
\label{maineq}
\end{equation}
then at least one of them must be constant. This beautiful result, known to some experts as the de Branges Lemma, was proved by Maurice Heins in 1959 (see \cite[Theorem 5.1]{Heins}). It plays a crucial role in the proof of de Branges's Ordering Theorem \cite[Theorem 35]{de Branges} and has many applications in function theory and spectral theory
of differential operators (see e.g. \cite{Belov, Belov2}).  First of all we prove a small refinement of Heins's Theorem.
}
\begin{theorem}\label{minimal type}
Let $f, g$ be entire functions of minimal and finite exponential type, respectively,{ which satisfy \eqref{maineq}. Then either $f$ or $g$ is constant.}
\end{theorem}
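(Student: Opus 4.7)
The plan is to argue by contradiction: assume both $f$ and $g$ are non-constant, and show this forces $g$ to be bounded on $\mathbb{C}$, contradicting Liouville's theorem.

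A standard application of the maximum modulus principle shows that every connected component of the open set $V := \{z\in\mathbb{C} : |f(z)|>1\}$ is unbounded. (On a bounded component $\Omega$, $|f|=1$ on $\partial\Omega$ by openness of $V$, and the maximum principle applied to the holomorphic function $f$ on $\overline\Omega$ would give $|f|\leq 1$ throughout $\Omega$, contradicting $|f|>1$ there.) By hypothesis $V\subseteq \{|g|\leq 1\}$, so $|g|\leq 1$ on $V$, hence on $\overline V$ by continuity.

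Next I extend $|g|\leq 1$ to the complement $W := \mathbb{C}\setminus\overline V$ by a Phragm\'en--Lindel\"of-type argument applied component by component. On a connected component $K$ of $W$, the subharmonic function $\log|g|$ satisfies $\log|g|\leq 0$ on $\partial K\subseteq\{|f|=1\}$, together with the global growth bound $\log|g(z)|\leq \tau|z|+O(1)$ coming from $g$ being of finite exponential type $\tau$. For a bounded component, the maximum principle gives $\log|g|\leq 0$ at once. For an unbounded component $K$, the same conclusion follows from a quantitative Phragm\'en--Lindel\"of estimate in the spirit of Ahlfors' distortion theorem, provided $K$ is sufficiently narrow at infinity — concretely, its angular width $\theta_K(r) := \bigl|\{\theta : re^{i\theta}\in K\}\bigr|$ should satisfy $\theta_K(r)=O(1/r)$, which is exactly what is needed to beat the type-$\tau$ growth of $\log|g|$ inside $K$.

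The required narrowness of unbounded components of $W=\{|f|<1\}$ is delivered by the minimal exponential type of $f$. Heuristically, if some unbounded component $K$ of $W$ were too wide — say, comparable to a half-plane — then $f$ being bounded by $1$ on $K$ combined with the minimal-type estimate $\log|f(z)|=o(|z|)$ would, via Phragm\'en--Lindel\"of for the half-plane, force $|f|$ to be bounded on $\mathbb{C}$ and hence $f$ to be constant, contradicting our standing assumption. The quantitative version is obtained by applying Ahlfors' distortion to the subharmonic function $\log|f|$ on the adjacent components of $V$ (where $\log|f|>0$ with zero boundary values and subexponential growth), pinning the angular width of each unbounded component of $W$ at the required $O(1/r)$ asymptotic.

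The main obstacle will be coupling the two distortion estimates cleanly: the thinness of the components of $W$ extracted from the minimal type of $f$ must be quantitatively strong enough to defeat the linear-in-$|z|$ exponent $\tau |z|$ in the growth of $g$. The geometry of possibly multiply connected components is tamed in the usual way, by a preliminary conformal mapping of each component to a simply connected model domain (a half-plane or strip) where the distortion estimates are standard.
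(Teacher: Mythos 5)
Your overall strategy (get $|g|\le 1$ on $\{|f|>1\}$, then push this across the components of $\{|f|<1\}$ by Phragm\'en--Lindel\"of, using the minimal type of $f$ to certify that those components are thin) is in the right circle of ideas, but the step you yourself flag as ``the main obstacle'' is not a technicality to be deferred --- it is the entire content of the theorem, and the quantitative form in which you assert it is wrong. First, the claim that unbounded components of $W=\{|f|<1\}$ have angular width $O(1/r)$ is false for minimal-type $f$: for $f(z)=\cos\sqrt z$ (order $1/2$, hence of minimal exponential type) the set $\{|f|<1\}$ is essentially the parabolic region $\{|\arg z|\lesssim r^{-1/2}\}$, of angular width $\asymp r^{-1/2}$. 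It is also not the condition you actually need: to beat $e^{\tau|z|}$ growth by Ahlfors distortion in a component $K$ one needs $\pi\int^r\frac{dt}{t\,\theta_K(t)}-\log r\to+\infty$, which already holds if $\theta_K\le\pi-\eps$; what must be excluded is precisely the borderline regime $\theta_K(r)\to\pi$. Second, your dichotomy (``either $K$ is narrow enough for $g$'s Phragm\'en--Lindel\"of, or it is comparable to a half-plane and $f$'s Phragm\'en--Lindel\"of applies'') has a hole exactly there: if the widths of $\{|f|>1\}$ and $\{|f|<1\}$ both tend to $\pi$, then $K$ is too wide for the type-$\tau$ function $g$ (width exactly $\pi$ is the borderline case of $e^{z}$ in a half-plane) and simultaneously too narrow for the minimal-type argument applied to $f$. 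A further minor defect: a multiply connected component cannot be conformally mapped onto a half-plane or strip, so that reduction as stated does not exist.

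The gap can be closed, but only by an integrated (not pointwise) coupling of the two estimates, and this is what the paper does. It applies an $L^2$ Carleman-type inequality (Lemma~\ref{main lemma}) to both $u=\log^+|f_3|$ and $v=\log^+|g_3|$, uses the disjointness of the level sets to get $m_u(e^s)+m_v(e^s)\le1$, hence $\eta_u(s)+\eta_v(s)\ge4$ by the AM--HM inequality, and then sums the two lower bounds with a weight $q(\tau)\to\infty$ chosen so that $q(\tau)u^2=O(e^{2\tau})$ --- possible precisely because minimal type gives $u=o(e^{\tau})$ rather than $O(e^{\tau})$; the right-hand side then grows like $\sqrt{q(\tau)}\,e^{2\tau}$ and contradicts the $O(e^{2\tau})$ left-hand side. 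In your language: minimal type gives $\pi\int^r\frac{dt}{t\,\theta_{\{|f|>1\}}(t)}\le\log r-\psi(r)$ with $\psi\to\infty$, and the width-sum constraint plus AM--HM converts this into $\pi\int^r\frac{dt}{t\,\theta_K(t)}\ge\log r+\psi(r)+O(1)$, which is what defeats $\tau r$. As written, your proposal asserts the conclusion of that computation rather than performing it, so the proof is incomplete at its decisive step.
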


This theorem obviously fails in the case of both functions having finite exponential type (consider $f(z) = e^z, g(z) = e^{-z}$). Nevertheless something can be said about domains where these functions are small. {  The main aim of this paper
is to study geometric properties of such domains.
\smallskip

A careful analysis of the proof of {Heins's Theorem} shows that the sets $\{|f(z)| \leq 1\}$ and $\{ |g(z)| \leq 1\}$
have to be close to half-planes.
}

\begin{theorem}\label{two arcs theorem}
{Let $f, g$ be nonconstant entire functions of finite exponential type { which satisfy \eqref{maineq}.} Then for any $\eps > 0$ there exists a set $E_\eps$ of finite measure 
 such that for $\tau \in \R_+ \backslash E_\eps$ the sets $\{|f(z)| \leq 1\}$ and $\{ |g(z)| \leq 1\}$ intersect circle of radius $e^\tau$ centered at the origin} in two opposite semicircles modulo a set of the circle of angular measure at most $\eps${, i.e. there exists a semicircle $C\subset \{|z| = e^{\tau}\}$ and a set $B$ of angular measure at most $\eps$ such that $$|f(z)| \le 1, |g(z)| > 1, z\in C\backslash B,$$ $$|f(z)| > 1, |g(z)| \le 1, z\in (-C)\backslash B.$$}
\end{theorem}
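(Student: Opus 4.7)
Write $u = \log|f|$ and $v = \log|g|$: these are subharmonic on $\Cm$ of finite exponential type (say, $u(z), v(z) \leq \sigma|z|$ for large $|z|$), and the hypothesis $\sup_z \min\{|f|,|g|\} \leq 1$ translates to $\min(u, v) \leq 0$ everywhere, so the open sets $U = \{u > 0\}$ and $V = \{v > 0\}$ are disjoint. For $\tau \geq 0$ let $\alpha(\tau), \beta(\tau)$ denote the angular Lebesgue measures of $U \cap \{|z|=e^\tau\}$ and $V \cap \{|z|=e^\tau\}$; disjointness yields $\alpha(\tau) + \beta(\tau) \leq 2\pi$. By Theorem~\ref{minimal type} neither $f$ nor $g$ is of minimal type, so both have positive exponential type; combined with the maximum principle this forces $U$ and $V$ to each have at least one unbounded connected component. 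The plan is to apply Ahlfors' distortion estimate to each unbounded component of $U$ and $V$, then combine the resulting integral bounds with the strict convexity of $x \mapsto 1/x$ at $x = \pi$.

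Let $\Omega_1, \dots, \Omega_k$ be the unbounded components of $U$ and $\Pi_1, \dots, \Pi_\ell$ those of $V$, with angular cross-sections $\theta_i(s), \phi_j(s)$ at $|z|=e^s$. For each $\Omega_i$, Ahlfors' estimate applied to $u$ on $\Omega_i$ (using $u = 0$ on $\partial \Omega_i$ and $u(z) \leq \sigma|z|$) yields a bound of the form
\[
\int_{\tau_0}^{T} \frac{ds}{\theta_i(s)} \leq \frac{T}{\pi} + C_i,
\]
and analogously for $\phi_j$. Summing over all $k+\ell$ unbounded components and invoking Cauchy--Schwarz together with the disjointness bound $\sum_i \theta_i(s) + \sum_j \phi_j(s) \leq 2\pi$ gives
\[
\frac{(k+\ell)^2 T}{2\pi} \leq \int_{\tau_0}^T \Bigl( \sum_i \frac{1}{\theta_i(s)} + \sum_j \frac{1}{\phi_j(s)}\Bigr)\, ds \leq \frac{(k+\ell) T}{\pi} + O(1),
\]
which forces $k + \ell \leq 2$, hence $k = \ell = 1$. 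Writing $\Omega, \Pi$ for the unique unbounded components with cross-sections $\theta, \phi$, adding the two Ahlfors bounds gives $\int_0^\infty ( 1/\theta + 1/\phi - 2/\pi )\, ds < \infty$. The convexity inequality $1/\theta + 1/\phi - 2/\pi \geq c((\theta-\pi)^2 + (\phi-\pi)^2)$, valid under $\theta + \phi \leq 2\pi$, then shows that the set $\{s: |\theta(s) - \pi| > \eps \text{ or } |\phi(s) - \pi| > \eps\}$ has finite Lebesgue measure for any $\eps > 0$.

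The remaining step is to promote this cross-section estimate to a single-arc statement and deduce oppositeness. Since $\theta \leq \alpha$, $\phi \leq \beta$, and $\alpha + \beta \leq 2\pi$, on the good set one also has $\alpha, \beta \in [\pi - \eps, \pi + \eps]$, so the total angular measure of arcs of $U, V$ outside $\Omega, \Pi$ is at most $2\eps$. A per-arc refinement of Ahlfors' inequality (via extremal-length estimates in the ``slit annuli'' between consecutive arcs of $\Omega$ or $\Pi$) contributes additional $1/(\text{arc length})$ terms to the integrated bound, so any circle supporting a secondary arc of substantial size lies in another finite-measure exceptional subset of $\tau$. Outside the union of all these exceptional sets---which one must check assembles into a single set $E_\eps$ of finite measure---$U$ and $V$ each meet the circle $|z|=e^\tau$ in essentially one arc of length close to $\pi$. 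Being disjoint and together nearly filling the circle, these arcs must be opposite semicircles up to an exceptional angular set of measure $\leq \eps$. The central technical obstacle is this last step: Ahlfors' inequality naturally controls the cross-section of a whole domain, and making the per-arc refinement quantitative---with explicit control of the exceptional sets in terms of $\eps$ and assembly into a single finite-measure set---is more delicate than the global cross-section bound used in the first two steps.
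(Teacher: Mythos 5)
Your argument correctly establishes the \emph{total cross-section} version of the statement: outside a set of $\tau$ of finite measure, the angular measures of $\{|f|>1\}$ and $\{|g|>1\}$ on the circle $|z|=e^{\tau}$ are both within $\eps$ of $\pi$ (the Cauchy--Schwarz count of unbounded components and the convexity step are sound, and parallel Lemmas \ref{eta estimate}--\ref{long arc theorem} of the paper). The genuine gap is exactly the one you flag in your last step, and it is not a removable technicality of your method. The Ahlfors distortion inequality controls $\int ds/\theta(s)$ with $\theta$ the \emph{total} angular measure of the cross-section of a component, and this quantity cannot distinguish between one arc of length $\pi$ and, say, two arcs of length $\pi/2$ interleaved with two arcs of $\{|g|>1\}$ of the same length: in that configuration $\int ds/\theta = \tau/\pi$, which is perfectly consistent with finite exponential type, so no amount of summing component-wise Ahlfors bounds will rule it out. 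Your proposed repair via extremal length in the slit sub-annuli also does not close the gap: a harmonic-measure estimate in a thin sub-piece only shows that $u=\log|f|$ is \emph{small} there, and a region on which $u$ is small but positive can still occupy an arc of substantial length on every circle.

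The tool that does the job --- and the one the paper uses --- is Lemma \ref{main lemma}, a Carleman-type $L^{2}$ estimate (from \cite{Romanov}, cf.\ \cite[Theorem 8.1]{Hayman}) which bounds $\int_{0}^{2\pi}u^{2}(e^{\tau}e^{i\theta})\,d\theta$ from below by $C\int_0^\tau\exp\bigl(\int_{0}^{\tau'}\eta_{u}(s)\,ds\bigr)d\tau'$, where $\eta_{u}(s)$ is the reciprocal of the \emph{largest single arc} of $V_u$ on the circle, normalized by $2\pi$. This reflects the first Dirichlet eigenvalue of the cross-section, which for a union of arcs is governed by the longest arc rather than the total measure. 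In the two-half-arc configuration one gets $\eta_u=4$, hence $\bigl(\int u^{2}\bigr)^{1/2}\gtrsim e^{2\tau}=|z|^{2}$, contradicting finite exponential type --- precisely the contradiction your Ahlfors-based bound cannot reach. Once the Ahlfors inequality is replaced by this eigenvalue estimate, the disjointness bound $m_{u}+m_{v}\le 1$ of \eqref{uveq} together with your convexity argument (the paper's Lemma \ref{technical lemma}) immediately yields the single-arc conclusion, and your final deduction of oppositeness goes through.
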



In view of this theorem it is reasonable to ask whether orientation of these semicircles can change{ when the radius tends to infinity}. We {are able to} answer this question in an affirmative way by constructing an entire function of finite exponential type which is bounded in some rotating half-plane, and we also give very close upper and lower bounds on possible rotation functions of this half-plane.

\begin{definition}\label{rotating half-planes}
{\normalfont Let $s:\R_{+} \to \R$ be a non-decreasing continuous function. We will say that $\Omega_s$ is a {\it rotating half-plane} with rotation function $s$ if}
\begin{equation*}
\Omega_s = \{ re^{i\alpha} \mid   r > 0,\, s(r) < \alpha < s(r) + \pi\}.
\end{equation*}
\end{definition}

\sloppy Note that the interior of the complement of a rotating half-plane $\Omega_s$ is a rotating half-plane with rotation function $s(r) + \pi$. Hence, if an entire function $f$ is bounded on a rotating half-plane $\Omega_s$, then $\sup_{z\in \Cm}\min\{|f(z)|, |f(-z)|\}<\infty$.

\medskip
{
{ Now we are ready to formulate the  main results of the paper.}

Put $h(r) = s(e^r)$. To avoid inessential technicalities we will always assume that $s\in C^2(\R_+)$.
\begin{theorem}\label{existence theorem}
For every rotating half-plane $\Omega_s$ such that $h'(x)\to 0$ as $x\to +\infty$ and 
\begin{equation}\label{sqint}
\intt_0^\infty h'(x)^2 dx < \infty
\end{equation}
 there exists a nonconstant entire function of finite exponential type bounded in $\Omega_s$.
\end{theorem}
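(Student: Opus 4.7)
My plan is to construct $f$ as a ``rotated exponential'' of the form
\begin{equation*}
f(z) \;=\; \exp\bigl(iz\, e^{-i\Psi(z)}\bigr),
\end{equation*}
where $\Psi$ is a holomorphic function on $\Cm\setminus\{0\}$ whose real part approximates the rotation $s(|z|)$ and whose imaginary part is uniformly bounded. The motivation is that the model case $\Psi\equiv 0$ recovers $e^{iz}$, which is bounded on $\Omega_s$ for $s\equiv 0$. A direct computation in polar coordinates gives
\begin{equation*}
\log|f(re^{i\theta})| \;=\; -r\,e^{\im\Psi(re^{i\theta})}\sin\bigl(\theta - \re\Psi(re^{i\theta})\bigr),
\end{equation*}
so $|f|\le 1$ on $\Omega_s$ holds whenever $\re\Psi(z)$ tracks $s(|z|)$ well enough that $\sin(\theta-\re\Psi)\ge 0$ on $\Omega_s$, and the exponential type of $f$ is at most $e^{\sup\im\Psi}$. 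Assuming without loss of generality that $s$ is constant for small $r$ (modifying $s$ on a bounded set does not affect the conclusion) ensures $\Psi$ has a finite limit at $0$, which together with the factor $iz$ lets $f$ extend analytically through $z=0$ and be entire.

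To construct $\Psi$, I pass to the logarithmic coordinate $w=\log z$: then $\Omega_s$ becomes the undulating strip $S = \{x+iy : h(x)<y<h(x)+\pi\}$, and $\Psi$ lifts to a $2\pi i$-periodic holomorphic function $G(w)=\Psi(e^w)$ on the $w$-cylinder $\Cm/2\pi i \Z$. The task becomes: build $G$ with $\re G(w)$ approximately equal to $h(\re w)$ on $S$ (with an error small enough that $\sin(y-\re G)\ge 0$ throughout $S$) and $\im G\in L^\infty(\Cm)$. A natural approach is to define $G$ through a Cauchy/Poisson-type integral against the $2\pi i$-periodic Cauchy kernel $\tfrac12\coth(\cdot/2)$ on the cylinder, so that the boundary data $h$ on a reference horizontal line is harmonically (and then holomorphically) extended to the whole cylinder. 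The hypothesis $h'\in L^2(\R_+)$ ensures absolute convergence of the relevant integrals after one integration by parts (transferring the derivative onto the kernel), while $h'(x)\to 0$ controls the asymptotic behavior of $G$ as $\re w\to\pm\infty$.

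The main obstacle is the $L^\infty$-bound on $\im G$, together with a matching bound on the discrepancy $\re G(w) - h(\re w)$ near $\partial S$ that keeps $\sin(y-\re G)$ from flipping sign slightly inside $\Omega_s$. A priori, $\im G$ is a $2\pi i$-periodic Hilbert transform of $h'$, which is $L^2$-bounded but not $L^\infty$-bounded in general, so the monotonicity $h'\ge 0$, the decay $h'(x)\to 0$, and the square-integrability $\int_0^\infty h'(x)^2 dx<\infty$ must all enter together. I expect these hypotheses to combine as follows: writing $\im G$ as a convolution of the positive density $h'$ with a kernel of definite sign on each half-period, integrating by parts to transfer the derivative from $h'$ onto the kernel (whose derivative is smooth and rapidly decaying away from its poles on the cylinder), and applying Cauchy--Schwarz with $\int h'^2<\infty$ should yield an estimate of the shape $\|\im G\|_\infty \le C(1+\|h'\|_{L^2})$. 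Once this is established, the formula for $\log|f|$ shows directly that $f$ is entire, of finite exponential type, and bounded in $\Omega_s$, completing the proof.
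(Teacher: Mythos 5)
Your approach is genuinely different from the paper's, but it contains a structural flaw that no amount of work on the kernel estimates for $G$ can repair. Your candidate $f(z)=\exp\bigl(iz\,e^{-i\Psi(z)}\bigr)$ is zero-free, and a zero-free entire function of finite exponential type is necessarily of the form $e^{az+b}$: writing $f=e^{P}$ with $P$ entire, the bound $\re P=\log|f|\le \sigma|z|+C$ together with the Borel--Carath\'eodory theorem forces $P$ to be a polynomial of degree at most one. But $e^{az+b}$ is bounded only on a fixed, non-rotating half-plane, so it cannot satisfy the conclusion for a genuinely rotating $\Omega_s$. The same obstruction shows up directly in your requirements on $\Psi$: finite exponential type forces $|z\,e^{-i\Psi(z)}|\le C|z|$, i.e.\ $\im\Psi\le\log C$ on all of $\Cm\setminus\{0\}$; then $e^{-i\Psi}$ is holomorphic and bounded on $\Cm\setminus\{0\}$, its singularities at $0$ and $\infty$ are removable, and Liouville's theorem makes it constant, hence $\Psi$ is constant and $\re\Psi$ cannot track a nonconstant $s(|z|)$. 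Equivalently, on the cylinder your $G$ would be a holomorphic function with $\im G$ bounded above everywhere and $\re G(x+iy)\approx h(x)\to\infty$, which is impossible. So the difficulty is not, as you anticipate, the $L^\infty$ bound on a Hilbert transform of $h'$; it is that the ansatz itself is ruled out a priori.

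The moral, which the paper's construction respects, is that the desired function must have infinitely many zeros (clustering near the rotating boundary rays); any formula of the shape $\exp(\text{entire function of linear growth})$ cannot work. The paper never prescribes $f$ in closed form: it builds an auxiliary function $g$ that is small on the boundary of a curved strip $\Omega$ disjoint from $\Omega_s$ and large inside $\Omega$ (this is where $h'\to 0$ and $\int_0^\infty h'(x)^2\,dx<\infty$ enter, through Warschawski's two-sided estimates for the conformal map of the curved strip onto a straight one), and then defines $f$ as the Cauchy transform of $g$ over $\partial\Omega$. Contour shifting makes $f$ entire of finite exponential type, the positive distance from $\Omega$ to $\Omega_s$ makes $f$ bounded on $\Omega_s$, and the residue identity $f=g+O(1)$ inside $\Omega$ makes $f$ nonconstant. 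To salvage an explicit construction you would have to replace $e^{-i\Psi}$ by something with zeros, which amounts to a different proof.
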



It turns out that under additional regularity of the function $h$ condition \eqref{sqint} is also necessary for the existence of an entire function of finite exponential type bounded in $\Omega_s$.

\begin{theorem}\label{regular upper bound}
Let $\Omega_s$ be a rotating half-plane with rotation function $s$ and let $f$ be an entire function of finite exponential type bounded on $\Omega_s$. Assume that $s$ is such that

$$\intt_0^\infty h'(x)^2dx = \infty \text{\quad and\quad }  \intt_0^\infty |h''(x)| dx < \infty.$$

Then $f$ is constant.
\end{theorem}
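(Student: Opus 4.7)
\medskip
\noindent\textbf{Proof plan.} My approach is to pass to logarithmic coordinates and reduce the claim to a Phragm\'en--Lindel\"of estimate on a single curved strip; the divergence of $\int h'^2$ enters through a conformal distortion bound that uses the regularity of $h$. Set $F(w)=f(e^{w})$: then $F$ is entire in $w$ and $2\pi i$-periodic, the finite-type hypothesis gives $|F(w)|\le A\exp(\sigma e^{\operatorname{Re}w})$ (with $\sigma$ the exponential type of $f$), and the boundedness of $f$ on $\Omega_s$ translates into $|F|\le M$ on the $2\pi i$-periodic family of curved strips $\tilde\Omega=\{(x,y):h(x)<y<h(x)+\pi\}+2\pi i\Z$. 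By periodicity it suffices to show that $F$ is bounded on a single complementary strip $\tilde G=\{(x,y):h(x)+\pi<y<h(x)+2\pi\}$; combined with the bound on $\tilde\Omega$ this will make $F$ a bounded entire function, hence constant by Liouville, and thus $f$ constant.

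Let $\Phi\colon\tilde G\to S=\{0<\operatorname{Im}\zeta<\pi\}$ be a conformal map and set $G=F\circ\Phi^{-1}$. Then $G$ is holomorphic in $S$, $|G|\le M$ on $\partial S$, and $|G(\zeta)|\le A\exp(\sigma e^{x(\zeta)})$ with $x(\zeta)=\operatorname{Re}\Phi^{-1}(\zeta)$. The key claim is the distortion bound
\begin{equation*}
\operatorname{Re}\Phi(w)-\operatorname{Re}w\;\longrightarrow\;+\infty\qquad\text{as}\quad\operatorname{Re}w\to+\infty,\ w\in\tilde G.
\end{equation*}
Given this, $\log|G(\zeta)|=o(e^{\xi})$ as $\xi=\operatorname{Re}\zeta\to+\infty$ and the classical Phragm\'en--Lindel\"of principle in the strip $S$ forces $|G|\le M$ on $S$; the end $\xi\to-\infty$ is harmless because $e^{x}\to 0$ there.

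To prove the distortion bound I would determine the conformal factor of $\Phi$ along the middle curve $y=h(x)+\tfrac{3\pi}{2}$ of $\tilde G$. Since the strip has constant Euclidean width $\pi$ but its centreline tilts with angle $\arctan h'(x)$, the width of $\tilde G$ measured perpendicular to the centreline at abscissa $x$ equals $\pi/\sqrt{1+h'(x)^2}$; conformality then forces $|\Phi'(w)|\sim\sqrt{1+h'(x)^2}$ along the centreline, and since the arclength element there is $\sqrt{1+h'^2}\,dx$, one obtains $d\xi/dx\sim 1+h'^2$. Integrating,
\begin{equation*}
\operatorname{Re}\Phi(w)\;\ge\;x+\int_{0}^{x}h'(t)^2\,dt-O(1),
\end{equation*}
which diverges by the hypothesis $\int h'^2=\infty$.

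The main obstacle is turning this local conformal-factor computation into a rigorous global asymptotic. The underlying statement is a Warschawski--Ahlfors type distortion estimate for a strip of constant Euclidean width whose centreline turns slowly, and the hypothesis $\int|h''|<\infty$ is exactly what guarantees that $h'$ has bounded variation---so the tilt angle varies slowly enough for the pointwise relation $|\Phi'|\sim\sqrt{1+h'^2}$ to integrate into the global bound above. Without such regularity the leading-order formula for $\operatorname{Re}\Phi$ need not hold.
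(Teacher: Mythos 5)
Your overall strategy---pass to logarithmic coordinates, map the complementary curved strip conformally onto a straight strip, prove that the conformal map stretches the real part by an extra $\int_0^x h'(t)^2\,dt\to\infty$, and finish with Phragm\'en--Lindel\"of plus Liouville---is exactly the route the paper takes in its Theorem \ref{regular upper bound 2}, and the rigorous form of your distortion bound is Warschawski's lower estimate \eqref{conf lower} from \cite{War}. However, there is a genuine gap: that estimate (Theorem VI of \cite{War}) requires $\phi_\pm'(u)\to 0$, i.e.\ $h'(x)\to 0$, and this is \emph{not} among the hypotheses of the theorem you are proving. The hypothesis $\int|h''|<\infty$ only gives that $h'(x)$ converges to some limit $H$; the case $H\neq 0$ is perfectly consistent with the assumptions (indeed it automatically forces $\int h'^2=\infty$) and is not covered by the distortion machinery you invoke. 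Your heuristic $|\Phi'|\sim\sqrt{1+h'^2}$ still looks plausible for an asymptotically tilted strip, but bounded variation of $h'$ alone is not "exactly what guarantees" the global asymptotic in the form you need---the quoted Warschawski lower bound genuinely uses the vanishing of the slopes, and you would have to supply a separate argument (or a different distortion theorem) for $H\neq 0$.

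The paper resolves precisely this point by a case split: if $H=0$ it applies the Warschawski-based argument (your argument, made rigorous by citing \eqref{conf lower}); if $H\neq 0$ then $s(r)\sim H\log r$, so $\limsup_{r\to\infty}s(r)/\sqrt{\log r}>0$, and it invokes an entirely different proof (Theorem \ref{upper boundintr}) based on harmonic-measure/extremal-length estimates in the truncated strip. A secondary, lesser issue is that even in the case $H=0$ you explicitly leave the key distortion bound as an acknowledged obstacle rather than proving it or citing a precise statement; as written the proposal is a correct plan for that case but not a complete proof. To repair the proposal you should either (a) add the case split and handle $H\neq 0$ separately, or (b) find and cite a distortion estimate valid under $|h'|\le m$ and $h''\in L^1$ without assuming $h'\to 0$.
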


In particular, if the function $h(x)$ is concave, then these two theorems give us that condition \eqref{sqint} is necessary and sufficient (note that $h'(x) \ge 0$ since we assume that $s(r)$ is increasing).

Proofs of  Theorems \ref{existence theorem}, \ref{regular upper bound} are based on the estimates for conformal mappings between infinite strips from \cite{War}.

Interestingly, we can deduce a version of  Theorem \ref{regular upper bound} without assuming any regularity of the function $s$ using estimates for harmonic measure in simply connected domains.
\begin{theorem}\label{upper boundintr}
Let $\Omega_s$ be a rotating half-plane with rotation function $s$ and let $f$ be an entire function of finite exponential type bounded on $\Omega_s$. If the function $s$ is such that $$\limsup\limits_{r\to\infty} \frac{s(r)}{\sqrt{\log r}} > 0,$$ then $f$ is constant.
\end{theorem}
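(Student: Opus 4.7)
The plan is to work in the complementary rotating half-plane $\Omega' = \mathrm{int}(\Cm \setminus \overline{\Omega_s})$, reduce the statement to a harmonic measure estimate via the Phragm\'en--Lindel\"of principle, and then use the hypothesis on $s$ to show this harmonic measure decays faster than $1/R$. Since $|f|\le M$ on $\Omega_s$ and hence on $\partial\Omega' = \partial\Omega_s$ by continuity, and $|f(z)|\le C e^{\sigma|z|}$ by the exponential type of $f$, the two-constants principle applied to $\log|f|$ on $\Omega'\cap\{|z|<R\}$ gives, for each $z_0\in\Omega'$ and large $R$,
$$
\log|f(z_0)| \;\le\; \log M \;+\; \sigma R\cdot \omega_R(z_0), \qquad \omega_R(z_0) := \omega\bigl(z_0,\,\{|z|=R\}\cap\Omega',\,\Omega'\cap\{|z|<R\}\bigr).
$$
It therefore suffices to exhibit a sequence $R_n\to\infty$ with $R_n\omega_{R_n}(z_0)\to 0$: this forces $|f(z_0)|\le M$ for every $z_0\in\Omega'$, hence $|f|\le M$ on all of $\Cm$, and Liouville's theorem completes the proof.

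To control $\omega_R(z_0)$, pass to the logarithmic variable $w=\log z$: the map $z\mapsto\log z$ sends $\Omega'$ conformally onto the tilted strip $\widetilde\Omega' = \{x+iy: h(x)+\pi < y < h(x)+2\pi\}$ of constant vertical width $\pi$, and the arc $\{|z|=R\}\cap\Omega'$ becomes the cross-section $\{\Re w = \log R\}$. A length-area (extremal-length) argument in $\widetilde\Omega'$, using cross-sections taken perpendicular to the local strip axis so as to capture the tilt contribution (which Ahlfors' standard horizontal cross-sections miss), yields the modulus lower bound
$$
\mathrm{mod}\bigl(\widetilde\Omega'\cap\{0<\Re w<X\}\bigr) \;\ge\; \frac{1}{\pi}\!\int_0^X\!\bigl(1+h'(x)^2\bigr)\,dx \;-\; O(1).
$$
Conformal invariance of harmonic measure then gives
$$
\omega_R(z_0) \;\le\; \frac{C(z_0)}{R}\exp\!\Bigl(-\int_0^{\log R}\!h'(x)^2\,dx\Bigr).
$$

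It remains to show that the hypothesis $\limsup_{r\to\infty} s(r)/\sqrt{\log r}>0$, i.e.\ $\limsup_{x\to\infty} h(x)/\sqrt{x}>0$, combined with the monotonicity of $s$ (hence $h$), forces $\int_0^\infty h'(x)^2\,dx=\infty$. Choose $\delta>0$ and a sequence $x_n\to\infty$ with $h(x_n)\ge\delta\sqrt{x_n}$; by passing to a subsequence assume $x_n\ge 2x_{n-1}$. Let $y_n = \inf\{x : h(x)\ge\delta\sqrt{x_n}\}\le x_n$; by continuity of $h$ we have $h(y_n)=\delta\sqrt{x_n}$, and the intervals $[y_{n-1},y_n]$ are disjoint with $y_n-y_{n-1}\le x_n$. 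Cauchy--Schwarz then yields
$$
\int_{y_{n-1}}^{y_n}\!h'(x)^2\,dx \;\ge\; \frac{\delta^2\bigl(\sqrt{x_n}-\sqrt{x_{n-1}}\bigr)^2}{y_n-y_{n-1}} \;\ge\; \delta^2\bigl(1-2^{-1/2}\bigr)^{\!2},
$$
using $x_{n-1}/x_n\le 1/2$. Summing over $n$ gives $\int_0^\infty h'^2\,dx=\infty$, and combined with the estimate above we conclude $R_n\omega_{R_n}(z_0)\to 0$ along $R_n=e^{x_n}$, as required. The principal obstacle is the modulus/harmonic-measure bound used in the second paragraph: whereas Theorem \ref{regular upper bound} invokes the Warschawski conformal-mapping asymptotics (which demand $\int|h''|\,dx<\infty$ so that the perpendicular cross-sections do not focus), the present setting allows arbitrary $C^2$ functions $h$, and the required estimate must be produced purely by harmonic-measure techniques in simply connected domains that remain valid for this rougher class of rotation functions.
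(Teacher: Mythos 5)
Your overall architecture (pass to $w=\log z$, two-constants/Phragm\'en--Lindel\"of reduction, then a harmonic-measure decay beating $1/R$) matches the paper's, but the proof has a genuine gap at exactly the point you flag as ``the principal obstacle'': the modulus lower bound $\mathrm{mod}\ge\frac1\pi\int_0^X(1+h'^2)\,dx-O(1)$ is not proved, and it is in fact \emph{false} for general non-decreasing $C^2$ rotation functions. Take $h$ to be a smoothed staircase with steps of height $\pi/2$ of width $\eps_k\to0$ placed at sparse abscissas: then $\int_0^X h'^2\approx\sum(\pi/2)^2/\eps_k$ can be made arbitrarily large, yet near each step the tilted strip $\{h(x)<y<h(x)+\pi\}$ still contains a fixed horizontal rectangle of height $\pi/2$ through which the connecting curves can pass, so (by estimating the conjugate extremal length with $\rho=1$) the modulus of $\widetilde\Omega'\cap\{0<\Re w<X\}$ stays $\le 4X/\pi$. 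The perpendicular-cross-section argument you invoke is precisely Warschawski's, and it genuinely needs $\int|h''|\,dx<\infty$ (no focusing); without that, no length-area choice of $\rho$ recovers the full $\int h'^2$ term. A downstream consequence is that your reduction to ``$\limsup h(x)/\sqrt x>0\Rightarrow\int h'^2=\infty$'' (which you do prove correctly) discards essential information: the implication ``$\int h'^2=\infty\Rightarrow f$ constant'' without regularity is exactly what the staircase example puts in doubt, so the theorem cannot be factored through it.

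The paper's proof avoids this entirely by using only the crude $\rho=1$ extremal-length estimate $\omega(z_0,E,G)\le\frac8\pi e^{-\pi d^2/A}$ (Garnett--Marshall), where $d$ is the geodesic distance inside the strip from a fixed cross-cut to $E_t$ and $A\le\pi t$ is the area. The hypothesis is used to pick a sparse sequence $t_k$ with $h(t_k)\ge c\sqrt{t_k}$ and $h(t_{k+1})\ge 2h(t_k)+2\pi+c^2/4$, so that any path reaching abscissa $t$ must, between consecutive $t_k$'s, travel at least $\sqrt{(t_{k+1}-t_k)^2+\frac{c^2}{4}t_{k+1}+\frac{c^4}{64}}\ge t_{k+1}-t_k+\frac{c^2}{8}$; hence $d\ge t+\frac{c^2 n}{8}-O(1)$ with $n\to\infty$, and $e^{t}e^{-\pi d^2/A}\le Ce^{-c^2 n/4}\to0$. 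Note that this exploits the $\sqrt{\log r}$ growth rate directly through the geometry of geodesics (square of total length over area), not through $\int h'^2$. If you want to salvage your write-up, replace the unproved modulus bound by this distance-squared-over-area estimate along the sparse sequence; the rest of your reductions then go through.
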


{
}
}
\subsection*{Acknowledgments} The author would like to thank Yurii Belov and Alexander Borichev for many helpful discussions. {The author also would like to thank Mikhail Sodin for pointing out the paper of Heins \cite{Heins} where the aforementioned  result about two functions of minimal type was proved. {The author is grateful to the anonymous referees for numerous remarks which substantially improved the exposition, and  for the suggestion to look into the paper of Warschawski \cite{War} which allowed the extension and simplification of the proof of Theorem \ref{existence theorem} and the proof of  Theorem \ref{regular upper bound}.}
}

\section{Functions of minimal and finite exponential type}

In this section we prove Theorems \ref{minimal type} and \ref{two arcs theorem}.

\subsection{Functions of minimal exponential type} 

\sloppy {For the proof of Theorem \ref{minimal type} we need the following variant of Lemma 27 from \cite{Romanov} { (see also {\cite[Theorem 8.1]{Hayman}}).}
	
\begin{lemma}{}\label{main lemma}
Let $u: \Cm \to \R$ be a continuous subharmonic function satisfying the following conditions:

\begin{enumerate}
\begin{item}$ u(z) \ge 0$ and $u(0) > 0$,
\end{item}

\begin{item} $u$ is smooth in some neighbourhood of $0$,
\end{item}

\begin{item}
$u$ is not constant in any neighbourhood of $0$.
\end{item}

\end{enumerate}

Put 
$$V_u = \overline{\{ z\mid u(z) > 0\}},$$ 
$$m_u(R)=\sup\{|I|/(2\pi): I  \text{ is an open interval},\ Re^{ix}\in V_u, x\in I\},$$
$$\eta_u(s)=\frac{1}{m_u(e^s)}.$$


Then there exists $C = C(u)> 0$ such that for all $\tau > 0$
\begin{equation}\label{lemma estimate}
\intt_0^{2\pi} u^2\left(e^\tau e^{i\theta}\right)d\theta \ge C\intt_0^{\tau}\exp\left(\intt_0^{\tau'}\eta_u(s)ds\right)d\tau'.
\end{equation}
\end{lemma}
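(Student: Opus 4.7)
The plan is to establish a Carleman-type differential inequality for the $L^2$-mean $J(\tau) := \int_0^{2\pi} u^2(e^\tau e^{i\theta})\, d\theta$ and then to compare it with a linear second-order ODE to derive \eqref{lemma estimate}. Since $u \ge 0$ is subharmonic with $u(0) > 0$, the sub-mean-value inequality (followed by Cauchy--Schwarz) yields $J(\tau) \ge 2\pi u(0)^2 > 0$ for every $\tau \in \R$; the smoothness of $u$ near $0$ and the assumption that it is not locally constant there ensure that $J'$ is strictly positive on a neighbourhood of $0$, giving the quantitative initial condition to be absorbed into $C(u)$.

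To derive the differential inequality, set $u_\tau := r\partial_r u$ with $r = e^\tau$. Differentiating twice under the integral gives $J''(\tau) = 2\int u_\tau^2\, d\theta + 2\int u\, u_{\tau\tau}\, d\theta$. Using the polar form $u_{\tau\tau} = r^2\Delta u - u_{\theta\theta}$ and integrating by parts in $\theta$ on each connected arc of $V_u \cap \{|z| = e^\tau\}$, on whose boundary $u$ vanishes, one obtains
\begin{equation*}
J''(\tau) = 2\int (u_\tau^2 + u_\theta^2)\, d\theta + 2r^2\int u\,\Delta u\, d\theta \ge 2\int (u_\tau^2 + u_\theta^2)\, d\theta,
\end{equation*}
since $u\, \Delta u \ge 0$ by subharmonicity. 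On each maximal arc $I$ of $V_u \cap \{|z| = e^\tau\}$, which has length at most $2\pi m_u(e^\tau)$, Wirtinger's inequality applied to $\theta \mapsto u(e^\tau e^{i\theta})$ yields $\int_I u_\theta^2\, d\theta \ge (\pi/|I|)^2 \int_I u^2\, d\theta$; summing the arc-contributions gives $\int u_\theta^2\, d\theta \ge (\eta_u(\tau)^2/4)\, J(\tau)$. Combining this with the Cauchy--Schwarz bound $\int u_\tau^2 \ge (J'(\tau)/2)^2 / J(\tau)$ (from $J'/2 = \int u\, u_\tau\, d\theta$) and rewriting in terms of $\phi := \sqrt{J}$, a short algebraic manipulation produces the Sturm--Liouville inequality
\begin{equation*}
\phi''(\tau) \ge \bigl(\eta_u(\tau)/2\bigr)^2\, \phi(\tau).
\end{equation*}

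To extract the integral bound, I compare $\phi$ with a positive solution $\psi$ of the linear equation $\psi'' = (\eta_u/2)^2\psi$: the Wronskian $\phi\psi' - \psi\phi'$ is non-increasing, so with suitable initial data (guaranteed by the normalization above) we have $\phi(\tau) \ge c_0\, \psi(\tau)$ on $[0,\infty)$. A Liouville transformation produces a solution behaving, up to lower-order corrections, like $\psi(\tau) \sim \exp\bigl((1/2)\int_0^\tau \eta_u(s)\, ds\bigr)/\sqrt{\eta_u(\tau)}$, so that $J(\tau) = \phi(\tau)^2 \gtrsim \exp\bigl(\int_0^\tau \eta_u(s)\, ds\bigr)/\eta_u(\tau)$. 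A single integration by parts shows that this last quantity dominates $\int_0^\tau \exp(\int_0^{\tau'}\eta_u(s)\, ds)\, d\tau'$ up to a constant, yielding \eqref{lemma estimate}.

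The main obstacle I anticipate is this final step: because $\eta_u$ is defined via a supremum in the definition of $m_u$, it may be quite irregular, and controlling the lower-order corrections in the Liouville transformation without extra smoothness on $\eta_u$ is delicate. A cleaner alternative I would actually pursue is to bootstrap the differential inequality $J'' \ge (\eta_u^2/2)\, J$ together with the Cauchy--Schwarz refinement into a first-order estimate $J'(\tau) \gtrsim \exp\bigl(\int_0^\tau \eta_u(s)\, ds\bigr)$ and then integrate once, producing \eqref{lemma estimate} directly from the lower bound on $J(0)$.
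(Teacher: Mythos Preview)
The paper does not actually prove this lemma: it is stated as a variant of Lemma~27 in \cite{Romanov} (cf.\ \cite[Theorem~8.1]{Hayman}) and then used without proof. Your outline is precisely the classical Carleman--Tsuji method that underlies those references, so in spirit you are reproducing the ``intended'' argument rather than offering an alternative.

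Two points deserve attention. First, a regularity issue: the hypotheses only give $u$ continuous and subharmonic (smoothness is assumed only near $0$), so the pointwise identities $J''(\tau)=2\int u_\tau^2+2\int u\,u_{\tau\tau}$ and $u_{\tau\tau}=r^2\Delta u-u_{\theta\theta}$ are not available as written. The standard remedy is to mollify $u$, derive the differential inequality for the smooth approximants, and pass to the limit; alternatively one can work with the Riesz measure $\Delta u$ and interpret the inequalities distributionally. Either way this step needs to be made explicit. A minor related slip: the integration by parts should be over the maximal arcs of $\{u>0\}$ on the circle (where $u$ genuinely vanishes at the endpoints), not of $V_u$; since $\{u>0\}\subset V_u$ the Wirtinger bound with $2\pi m_u(e^\tau)$ still follows.

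Second, you are right that the Liouville--WKB comparison is fragile when $\eta_u$ is irregular, and your proposed alternative is the correct one --- but the mechanism is worth spelling out. From
\[
J''\ \ge\ \frac{(J')^2}{2J}+\frac{\eta_u^2}{2}\,J
\]
one multiplies by $2J$ and applies AM--GM to the right-hand side: $(J')^2+\eta_u^2 J^2\ge 2\eta_u J J'$, hence $J''\ge \eta_u J'$. Since $u^2$ is subharmonic (because $u\ge0$ is), $J$ is convex and non-decreasing in $\tau$; the non-constancy of $u$ near $0$ then forces $J'(\tau_0)>0$ for some $\tau_0$. Integrating $(\log J')'\ge\eta_u$ from $\tau_0$ gives $J'(\tau)\ge J'(\tau_0)\exp\bigl(\int_{\tau_0}^{\tau}\eta_u\bigr)$, and one more integration yields \eqref{lemma estimate} with a constant depending on $J(0)$, $J'(\tau_0)$ and $\int_0^{\tau_0}\eta_u$. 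This replaces the Sturm comparison entirely and requires no regularity of $\eta_u$ beyond local integrability.
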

{ We tacitly assume that $m_u(R)=\infty$, $\eta_u(\log R)=0$ when $R\mathbb{T}\subset V_u$. }

\medskip

\begin{proof}[Proof of Theorem \ref{minimal type}]

Suppose that neither $f$ nor $g$ is constant.  
Put ${f_1(z) = f(z) - f(0)}$, $g_1(z) = g(z) - g(0)$. Since $f$ and $g$ are nonconstant, $f_1$ and $g_1$ are not identically zero and so for some $k, n \in \mathbb{N}$ and $C_1 > 0$ the functions $f_2(z) = C_1f_1(z)/z^k$ and $g_2(z) = C_1g_1(z)/z^n$ are entire and $|f_2(0)|, |g_2(0)| > 1$. 

Since $\min \{|f(z)|, |g(z)|\} \le 1$, for some $C_2 > 1$ we have $\min \{|f_2(z)|, |g_2(z)|\} \le \frac{C_2}{|z|}, |z| > 1$. So, if we consider $f_3(z) = f_2(C_2z)$ and $g_3(z) = g_2(C_2z)$ then they will satisfy $\min\{|f_3(z)|, |g_3(z)|\}\le \frac{1}{|z|} < 1,$ $|z| > 1$. On the other hand, $|f_3(0)|, |g_3(0)| > 1$. { It is easy to see from the maximum principle that neither $f_3$ nor $g_3$ is constant.}

\smallskip

Put $u(z) = \max\{0, \log |f_3(z)|\}$ and $v(z) = \max\{0, \log |g_3(z)|\}$. {Now we apply Lemma \ref{main lemma}
for $u$ and $v$.}

 We consider some function $q: \R_+ \to \R_+$ { which we choose later} and sum estimates \eqref{lemma estimate} for $u$ and $v$ with weights $q(\tau)$ and $1$. 
\begin{multline*}
\intt_0^{2\pi} q(\tau)u^2(e^{\tau}e^{i\theta}) + v^2(e^\tau e^{i\theta})d\theta \ge \\ C\intt_0^\tau q(\tau)\exp\left(\intt_0^{\tau'} \eta_u(s)ds\right) + \exp\left(\intt_0^{\tau'}\eta_v(s)ds\right)d\tau'.
\end{multline*}

{ Applying the  inequality between the arithmetic and geometric means to the right-hand side, we get}
\begin{multline}\label{long estimate}
\intt_0^{2\pi} q(\tau)u^2(e^{\tau}e^{i\theta}) + v^2(e^\tau e^{i\theta})d\theta \ge \\  2C\sqrt{q(\tau)}\intt_0^\tau \exp\left(\intt_0^{\tau'} \frac{\eta_u(s) + \eta_v(s)}{2}ds\right)d\tau'.
\end{multline}

{ We claim that  $\eta_u(s) + \eta_v(s) \ge 4$ for all $s > 0$. }

From the definition of $u$ and $v$ we have $|f_3(z)| \ge 1, z\in V_u$ and ${|g_3(z)| \ge 1, z\in V_v}$. Let $I, J\subset \R$ be intervals such that $e^se^{ix}\in V_u$, $x\in I$ and $ e^se^{ix}\in V_v$, $x\in J$.

Since $\min\{|f_3(e^se^{ix})|, |g_3(e^se^{ix}) |\} < 1$, $x\in\mathbb{R}$ either $|I| + |J| \le 2\pi$ or one of these intervals has length more than $2\pi$. But if, for example, $|I| > 2\pi$, then $|g_3(e^se^{ix})| < 1$, $x\in\mathbb{R}$. { This contradicts the maximum principle.} Hence, $|I| + |J| \le 2\pi$. Taking the supremum over $I$ and $J$ we get 
\begin{equation}
m_u(e^s) + m_v(e^s) \le 1.
\label{uveq}
\end{equation} 
{The claim now follows from the  inequality between the arithmetic and harmonic means.}

From $\eta_u(s) + \eta_v(s)\geq 4$ and \eqref{long estimate} we get
\begin{equation}\label{final estimate}
\intt_0^{2\pi} q(\tau)u^2(e^{\tau}e^{i\theta}) + v^2(e^\tau e^{i\theta})d\theta \ge C\sqrt{q(\tau)}(e^{2\tau} - 1).
\end{equation}

Now, since $f_3$ is of minimal exponential type and $g_3$ is of finite exponential type, we have $u(e^\tau e^{i\theta}) = o(e^\tau)$ and $v(e^{\tau}e^{i\theta}) = O(e^{\tau})$ and so we can choose $q$ such that $q(\tau)\to \infty$ as $\tau \to \infty$ and $q(\tau)u^2(e^{\tau}e^{i\theta}) = O(e^{2\tau})$. {But for this choice of $q$ the left-hand side of \eqref{final estimate} is $O(e^{2\tau})$ while right-hand side is not. We arrive at a contradiction.}
\end{proof}

\subsection{Functions of finite exponential type} { We fix two entire functions $f$ and $g$ of finite exponential type satisfying \eqref{maineq}.}


\begin{lemma}\label{eta estimate}

We have $\intt_0^\infty \left(\frac{\eta_u(s) + \eta_v(s)}{2} - 2\right)ds<\infty$ .
\end{lemma}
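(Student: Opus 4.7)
The plan is to apply Lemma \ref{main lemma} to $u = \max\{0, \log|f|\}$ and $v = \max\{0,\log|g|\}$ and combine the two estimates, exactly as in the proof of Theorem \ref{minimal type}, but now with the constant weight $q\equiv 1$. First, after performing the same preliminary normalization as there (which only alters the exponential type by a bounded factor), I may assume $|f(0)|,|g(0)|>1$; then $u$ and $v$ satisfy all the hypotheses of Lemma \ref{main lemma}. Summing the two copies of \eqref{lemma estimate} and applying AM--GM exactly as in the derivation of \eqref{long estimate} yields
\begin{equation*}
\int_0^{2\pi}\bigl(u^2(e^{\tau}e^{i\theta})+v^2(e^{\tau}e^{i\theta})\bigr)d\theta \ge 2C\int_0^\tau \exp\!\left(\int_0^{\tau'}\frac{\eta_u(s)+\eta_v(s)}{2}\,ds\right)d\tau'.
\end{equation*}

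Next I would compare the two sides. Because $f$ and $g$ have \emph{finite} exponential type, $u(e^\tau e^{i\theta})$ and $v(e^\tau e^{i\theta})$ are $O(e^\tau)$ uniformly in $\theta$, so the left-hand side is at most $M e^{2\tau}$ for some $M$ and all $\tau \ge 1$. To exploit the right-hand side, introduce
$$G(t) = \int_0^t\!\left(\frac{\eta_u(s)+\eta_v(s)}{2}-2\right)ds;$$
the already-established inequality $\eta_u+\eta_v\ge 4$ (equation \eqref{uveq} combined with AM--HM) shows that $G$ is nondecreasing and nonnegative. Writing the inner exponent as $2\tau'+G(\tau')$ and restricting the outer integration to $\tau'\in[\tau/2,\tau]$ gives
$$\int_0^\tau e^{2\tau'+G(\tau')}\,d\tau' \ge e^{G(\tau/2)}\int_{\tau/2}^\tau e^{2\tau'}\,d\tau' = \frac{e^{G(\tau/2)}(e^{2\tau}-e^\tau)}{2}.$$

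Combining the two bounds yields $C e^{G(\tau/2)}(e^{2\tau}-e^\tau)\le M e^{2\tau}$, so $e^{G(\tau/2)}$ stays bounded as $\tau\to\infty$; since $G$ is monotone, this forces $G$ to be bounded on $[0,\infty)$, which is precisely the claim. The only point needing care is the lower bound on the right-hand side: the dyadic window $[\tau/2,\tau]$ is just right, because $e^{2\tau'}$ already contributes a fixed fraction of $e^{2\tau}$ there, while replacing the nondecreasing factor $e^{G(\tau')}$ by its minimum $e^{G(\tau/2)}$ costs nothing. Beyond this observation the argument is entirely parallel to that of Theorem \ref{minimal type}, merely extracting quantitative information from the same chain of inequalities.
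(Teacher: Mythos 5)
Your proposal is correct and follows essentially the same route as the paper: both take \eqref{long estimate} with $q\equiv 1$, use that the left-hand side is $O(e^{2\tau})$ by finite exponential type, and exploit the nonnegativity and monotonicity of $\int_0^\tau\bigl(\tfrac{\eta_u+\eta_v}{2}-2\bigr)ds$ to force boundedness of that integral. The paper phrases it as a contradiction (an unbounded integral would make the right-hand side exceed $Ce^{C_3}e^{2\tau}$ for arbitrarily large $C_3$), while you argue directly via the window $[\tau/2,\tau]$; the difference is purely cosmetic.
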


\begin{proof}
We consider estimate \eqref{long estimate} with $q(\tau) = 1$

\begin{equation}\label{simple long estimate}
\intt_0^{2\pi} u^2(e^\tau e^{i\theta}) + v^2(e^\tau e^{i\theta})d\theta \ge 2C \intt_0^\tau \exp\left(\intt_0^{\tau'}\frac{\eta_u(s)+\eta_v(s)}{2}ds\right)d\tau'.
\end{equation}

Suppose that $\intt_0^\infty \left(\frac{\eta_u(s) + \eta_v(s)}{2} - 2\right)ds = \infty$. Since  $\frac{\eta_u(s) + \eta_v(s)}{2} - 2\ge 0$ {(see the proof of Theorem \ref{minimal type})},  for any $C_3 > 0$ there exists $\tau_0 > 0$ such that
\begin{equation*}
\intt_0^\tau \left(\frac{\eta_u(s) + \eta_v(s)}{2}\right)ds > 2\tau + C_3, \quad \tau>\tau_0.
\end{equation*}
Thus, for $\tau > \tau_0$ we have
\begin{multline}\label{eq9}
\intt_0^{2\pi} u^2(e^\tau e^{i\theta}) + v^2(e^\tau e^{i\theta})d\theta \ge \\ 2C \intt_{\tau_0}^\tau \exp\left(2\tau' + C_3\right)d\tau' = Ce^{C_3}(e^{2\tau} - e^{2\tau_0}).
\end{multline}

{ The left hand side of \eqref{eq9} is bounded by $ce^{2\tau}$ for some $c=c(f,g) > 0$. We can choose $C_3$ such that $c < Ce^{C_3}$. We arrive at  a contradiction for big enough $\tau$.}

\end{proof}

\begin{lemma}\label{technical lemma}
If $x, y > 0$, $x + y\le 1$ and $|x - \frac{1}{2}| > \eps$ then $\frac{1}{x} + \frac{1}{y} > 4 + \delta$ for some $\delta = \delta(\eps) > 0$.
\end{lemma}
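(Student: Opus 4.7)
The plan is to reduce the problem to the one-variable case and use a direct calculation. First I observe that for fixed $x\in(0,1)$, the quantity $\frac{1}{x}+\frac{1}{y}$ is decreasing in $y$, so replacing $y$ by $1-x$ (which is $\ge y$ by the hypothesis $x+y\le 1$) only decreases the left-hand side. Hence it suffices to prove the bound under the stronger assumption $x+y=1$, in which case
\begin{equation*}
\frac{1}{x}+\frac{1}{y}=\frac{x+y}{xy}=\frac{1}{x(1-x)}.
\end{equation*}

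Next I rewrite $x(1-x)=\frac{1}{4}-\left(x-\frac{1}{2}\right)^{2}$. The condition $\left|x-\frac{1}{2}\right|>\eps$ therefore gives
\begin{equation*}
x(1-x)<\frac{1}{4}-\eps^{2},
\end{equation*}
so that
\begin{equation*}
\frac{1}{x}+\frac{1}{y}\ge\frac{1}{x(1-x)}>\frac{1}{\frac{1}{4}-\eps^{2}}=\frac{4}{1-4\eps^{2}}=4+\frac{16\eps^{2}}{1-4\eps^{2}}.
\end{equation*}

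We may assume $0<\eps<\frac{1}{2}$, since otherwise $\left|x-\frac{1}{2}\right|>\eps\ge\frac{1}{2}$ combined with $x>0$ and $x+y\le 1$, $y>0$ is vacuous. Setting $\delta(\eps)=\frac{16\eps^{2}}{1-4\eps^{2}}>0$ completes the argument. There is no real obstacle here; the only minor point worth checking is that the reduction to the boundary $x+y=1$ is valid and that $\eps$ must be restricted to the nontrivial range, both of which are straightforward.
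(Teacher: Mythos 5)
Your proof is correct, and it takes a genuinely different (and cleaner) route than the paper's. The paper argues by cases on whether $1-x-y>\eps$: in the first case it uses the AM--HM inequality $\frac{1}{x}+\frac{1}{y}\ge\frac{4}{x+y}>\frac{4}{1-\eps}$, and in the second it uses the two-variable identity $\frac{1}{x}+\frac{1}{y}=4\bigl((x+y)-\frac{(x-y)^2}{x+y}\bigr)^{-1}$ together with the observation that there $|x-y|>\eps$, arriving at $\delta=\min\bigl\{\frac{4}{1-\eps^2}-4,\frac{4}{1-\eps}-4\bigr\}=\frac{4\eps^2}{1-\eps^2}$. Your monotonicity reduction to the boundary case $x+y=1$ eliminates the case analysis entirely and turns the problem into the one-variable computation $x(1-x)=\frac{1}{4}-\bigl(x-\frac{1}{2}\bigr)^2<\frac{1}{4}-\eps^2$, yielding the sharper constant $\delta=\frac{16\eps^2}{1-4\eps^2}$. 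Your remark that the range $\eps\ge\frac{1}{2}$ makes the hypothesis vacuous is the right way to dispose of the degenerate case (the paper does not address it explicitly); since the lemma only asserts the existence of some $\delta(\eps)>0$, both treatments are adequate, but yours is the more economical argument.
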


\begin{proof}
If $1 - x - y > \eps$, then $\frac{1}{x} + \frac{1}{y} \ge \frac{4}{x+y} > \frac{4}{1 - \eps}$. Otherwise
\begin{equation*}
\frac{1}{x} + \frac{1}{y} =  \frac{4}{(x + y) - \frac{(x +y - 2x)^2}{x + y}} > \frac{4}{1 - \eps^2},
\end{equation*}

Hence, if $\delta = \min\{\frac{4}{1 - \eps^2}-4, \frac{4}{1-\eps}-4\}$ we get the desired estimate.
\end{proof}


\begin{lemma}\label{long arc theorem}
For any $\eps > 0$ there exists $E_\varepsilon\subset\mathbb{R}_+$, $|E_\varepsilon|<\infty$ such that for any $\tau\in\mathbb{R}_+\setminus E_\varepsilon$ the set $\{ z: |z| = e^\tau,  |f_3(z)| \ge 1\}$ contains an arc of angular measure at least $\pi - \eps$.
\end{lemma}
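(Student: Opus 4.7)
The plan is to reduce the statement to a pointwise lower bound on $m_u(e^\tau)$ and then exploit the integrability granted by Lemma \ref{eta estimate}. Since $V_u=\overline{\{|f_3|>1\}}\subset\{|f_3|\ge 1\}$ by continuity, it suffices to produce an arc of angular measure $\ge\pi-\eps$ inside $V_u\cap\{|z|=e^\tau\}$; and by the very definition of $m_u(R)$ as the supremum of $|I|/(2\pi)$ over open intervals $I$ with $Re^{iI}\subset V_u$, any bound of the form $m_u(e^\tau)>\frac{1}{2}-\frac{\eps}{2\pi}$ automatically supplies such an arc.

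To obtain this bound on $m_u(e^\tau)$ for most $\tau$, I would fix $\eps'$ with $0<\eps'<\min(1/2,\,\eps/(2\pi))$ and apply Lemma \ref{technical lemma} pointwise in $s$ to $x=m_u(e^s)$ and $y=m_v(e^s)$. The hypothesis $x+y\le 1$ is precisely inequality \eqref{uveq}, established in the proof of Theorem \ref{minimal type} (the degenerate cases $m_u(e^s)\in\{0,\infty\}$ are either harmless by the convention for $\eta_u$ or forbidden by the maximum principle applied to $f_3,g_3$, just as in that proof). The conclusion of the lemma yields $\delta=\delta(\eps')>0$ with
\begin{equation*}
\bigl|m_u(e^s)-\tfrac{1}{2}\bigr|>\eps' \quad\Longrightarrow\quad \frac{\eta_u(s)+\eta_v(s)}{2}-2>\frac{\delta}{2}.
\end{equation*}

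From Lemma \ref{eta estimate} we know that $\int_0^\infty\bigl(\frac{\eta_u+\eta_v}{2}-2\bigr)ds<\infty$, with a nonnegative integrand. A Chebyshev-type estimate therefore gives
\begin{equation*}
\bigl|\{s>0: m_u(e^s)<\tfrac{1}{2}-\eps'\}\bigr|\le\frac{2}{\delta}\intt_0^\infty\Bigl(\frac{\eta_u(s)+\eta_v(s)}{2}-2\Bigr)ds<\infty.
\end{equation*}
Calling this set $E_\eps$, for $\tau\notin E_\eps$ we have $m_u(e^\tau)\ge\frac{1}{2}-\eps'$, so the supremum in the definition of $m_u(e^\tau)$ is at least $2\pi(\tfrac{1}{2}-\eps')>\pi-\eps$; hence an open arc of angular measure $\ge\pi-\eps$ is contained in $V_u\cap\{|z|=e^\tau\}\subset\{|f_3|\ge 1\}\cap\{|z|=e^\tau\}$, which is exactly the claim.

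The argument is essentially routine. The only real step is the Chebyshev-type passage from the integrability of $\frac{\eta_u+\eta_v}{2}-2$ to a finite-measure bound on a sublevel set, combined with the dictionary between \emph{a long arc in $V_u$} and \emph{$m_u$ close to $1/2$}; both ingredients are supplied by the preceding lemmas.
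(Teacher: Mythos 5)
Your proposal is correct and follows essentially the same route as the paper: combine the bound $m_u(e^s)+m_v(e^s)\le 1$ with Lemma \ref{technical lemma} to turn a deviation of $m_u$ from $1/2$ into a quantitative excess of $\frac{\eta_u+\eta_v}{2}$ over $2$, and then use the finiteness of the integral in Lemma \ref{eta estimate} (a Chebyshev argument) to conclude that the exceptional set of $\tau$ has finite measure. The translation back from $m_u(e^\tau)>\frac{1}{2}-\frac{\eps}{2\pi}$ to a long arc in $V_u\subset\{|f_3|\ge 1\}$ is exactly how the paper concludes as well.
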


\begin{proof}
We know that $m_u(e^\tau) + m_v(e^\tau) \le 1$ (see \eqref{uveq}). From Lemmas \ref{eta estimate} and \ref{technical lemma} it follows that for all $\tau > 0$ except for the set of finite measure $m_u(e^\tau) > \frac{1}{2} - \frac{\eps}{2\pi}$. By definition, this means that $V_u\bigcap \{z: |z| = e^{\tau}\}$ contains an arc of angular measure at least $\pi - \eps$. 
\end{proof}

\begin{lemma}
For any $\eps > 0$ there exists $E_\varepsilon\subset\mathbb{R}_+$, $|E_\varepsilon|<\infty$ such that for any $\tau\in\mathbb{R}_+\setminus E_\varepsilon$ the set $\{ |z| = e^\tau,  |f(z)| > 1\}$ contains an arc of angular measure at least $\pi - \eps$.
\label{l5}
\end{lemma}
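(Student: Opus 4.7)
My plan is to deduce this statement directly from Lemma \ref{long arc theorem} by inverting the normalisation that produced $f_3$ from $f$. By construction $f_3(z) = C_1(f(C_2 z) - f(0))/(C_2 z)^k$ with positive constants $C_1, C_2$ and an integer $k \ge 1$ (necessarily $k \ge 1$ because $f_1(0) = 0$ while $f_2(0) \ne 0$). Hence on any circle $|z| = e^{\tau'}$ the inequality $|f_3(z)| \ge 1$ is equivalent to $|f(C_2 z) - f(0)| \ge (C_2 e^{\tau'})^k/C_1$.

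I would then perform the change of variable $w = C_2 z$ and $\tau = \tau' + \log C_2$, so that $|w| = e^\tau$. Under this substitution, Lemma \ref{long arc theorem} provides, for $\tau'$ outside a set of finite measure, an arc on the circle $|w| = e^\tau$ of angular measure at least $\pi - \eps$ on which $|f(w) - f(0)| \ge e^{k\tau}/C_1$. The shifted exceptional set $E_\eps^{(3)} + \log C_2$ still has finite measure since translation preserves Lebesgue measure on $\R$.

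Since $k \ge 1$, the lower bound $e^{k\tau}/C_1$ grows without bound, so I fix $T_0 > 0$ such that $e^{kT_0}/C_1 > 1 + |f(0)|$ and define $E_\eps := (E_\eps^{(3)} + \log C_2) \cup [0, T_0]$, which has finite measure. For $\tau \in \R_+ \setminus E_\eps$ and $w$ in the arc above, the reverse triangle inequality yields $|f(w)| \ge |f(w) - f(0)| - |f(0)| > 1$ strictly, giving the desired arc. I do not anticipate any real obstacle here: the lemma is essentially a bookkeeping translation from the statement already proved for $f_3$ back to the original $f$, with a harmless finite-measure transition near the origin needed only to upgrade the non-strict inequality $\ge 1$ to a strict one.
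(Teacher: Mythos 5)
Your argument is correct and is essentially the paper's own proof: both deduce the statement from Lemma \ref{long arc theorem} by relating $f_3$ on the circle $\{|z| = e^{\tau'}\}$ to $f$ on the circle $\{|z| = e^{\tau}\}$ with $\tau = \tau' + \log C_2$, translating the exceptional set and discarding a bounded initial range of $\tau$. The paper states the key implication in contrapositive form ($|f(e^{\tau}e^{i\theta})| \le 1$ forces $|f_3(e^{\tau'}e^{i\theta})| < 1$ once $\tau > \log C_2$), which is exactly your reverse-triangle-inequality step.
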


\begin{proof}
We consider only those $\tau > \log C_2$ for which the statement of Lemma \ref{long arc theorem} is fulfilled  with $\tau' = \tau - \log C_2$. If  $|f(e^{\tau}e^{i\theta})| \le 1$  then $|f_3(e^{\tau'}e^{i\theta})| < 1$ 
and so $\{ \theta : |f_3(e^{\tau'}e^{i\theta})| \ge 1\}\subset\{ \theta : |f(e^{\tau}e^{i\theta})| > 1\}$. From Lemma \ref{long arc theorem} we get the result.
\end{proof}

{ Now we are ready to prove Theorem \ref{two arcs theorem}. We will prove a slightly more general result.}

\begin{theorem}
For any $\eps > 0$ there exists $E_\varepsilon\subset\mathbb{R}_+$, $|E_\varepsilon|<\infty$, such that  for any $\tau \in \mathbb{R}_+ \backslash E_\varepsilon$ there are two disjoint arcs $I$, $J$ of angular measure at least $\pi - \eps$ on the circle $\{ |z| = e^{\tau}\}$ such that $|f(z)| > 1$ and $|g(z)| \le 1$ on $I$ and $|g(z)| > 1$ and $|f(z)| \le 1$ on $J$.
\end{theorem}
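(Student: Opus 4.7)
The plan is to deduce the theorem directly from Lemma \ref{l5} applied to both $f$ and $g$, combined with the min bound \eqref{maineq}.

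First I would observe that the proof of Lemma \ref{l5}, through Lemmas \ref{long arc theorem}, \ref{eta estimate}, and \ref{technical lemma}, is entirely symmetric in $f$ and $g$. The subharmonic functions $u = \max\{0,\log|f_3|\}$ and $v = \max\{0,\log|g_3|\}$ enter inequality \eqref{uveq} and Lemma \ref{eta estimate} symmetrically, and Lemma \ref{technical lemma}, applied with $x = m_u(e^\tau)$ and $y = m_v(e^\tau)$, yields simultaneously $m_u(e^\tau) > \tfrac12 - \tfrac{\eps}{2\pi}$ and $m_v(e^\tau) > \tfrac12 - \tfrac{\eps}{2\pi}$ outside one common set of finite measure. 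Hence Lemma \ref{l5} holds verbatim for $g$ as well.

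Next, given $\eps > 0$, I would let $E_\eps$ be the union of the exceptional sets produced by Lemma \ref{l5} applied to $f$ and to $g$; this set still has finite measure. For any $\tau \notin E_\eps$ we then obtain, on the circle $\{|z| = e^\tau\}$, an arc $I$ of angular measure at least $\pi - \eps$ on which $|f| > 1$, and an arc $J$ of angular measure at least $\pi - \eps$ on which $|g| > 1$.

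Finally I would invoke the hypothesis \eqref{maineq}: at every point of the plane at least one of $|f|, |g|$ is $\leq 1$. Therefore on $I$, where $|f| > 1$, one automatically has $|g| \leq 1$; symmetrically on $J$ one has $|f| \leq 1$. The same observation forces $I \cap J = \emptyset$, since a common point would give $|f| > 1$ and $|g| > 1$ simultaneously, contradicting \eqref{maineq}. This yields the conclusion, and also Theorem \ref{two arcs theorem} (with exceptional set on the circle of angular measure at most $2\eps$, absorbed into the $\eps$ by rescaling). The only nontrivial point is verifying the symmetric version of Lemma \ref{l5} for $g$; after that the theorem is immediate.
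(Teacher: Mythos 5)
Your proposal is correct and follows essentially the same route as the paper: the paper's proof likewise applies Lemma \ref{l5} to both $f$ and $g$ (with the symmetry left implicit), takes the union of the exceptional sets, and then uses \eqref{maineq} to force $|g|\le 1$ on $I$, $|f|\le 1$ on $J$, and $I\cap J=\varnothing$. Your explicit check that the argument behind Lemma \ref{l5} is symmetric in $f$ and $g$ is a reasonable piece of diligence, but it is not a point of divergence.
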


\begin{proof}
{
In view of Lemma \ref{l5} we consider only those $\tau > 0$ for which there exist  arcs $I$, $J$ of angular measure at least $\pi - \eps$ such that for $|f(z)| > 1$, $z\in I$ and for $|g(z)| > 1$, $z\in J$.  From $\min \{|f(z)|, |g(z)|\} \le 1$ 
we get that $I\cap J=\varnothing$ and moreover $|f(z)| \le 1, z\in J$ and $|g(z)| \le 1, z\in I$. 
}
\end{proof}

{\section{Rotating half-planes}

{ In this section we prove Theorems \ref{existence theorem},  \ref{regular upper bound} and \ref{upper boundintr}.}
\subsection{Proofs of Theorems \ref{existence theorem} and \ref{regular upper bound}}

For continuous functions ${\phi_-(u) < \phi_+(u)}$ we will call the set

$$S = \{ z = u + iv\mid \phi_-(u) < v < \phi_+(u), u > M\}, -\infty \le M < +\infty$$
a semi-infinite strip bounded by the curves $v = \phi_+(u), v = \phi_-(u), u = M$. Let $\theta (u) = \phi_+(u) - \phi_-(u)$ and $\psi(u) = \frac{1}{2}(\phi_+(u) + \phi_-(u))$.

We will denote by $Z(w) = X(w) + iY(w)$ a conformal mapping from $S$ to $S_0 = \{ x + iy\mid |y| < \frac{\pi}{2}\}$ such that $\Re w$ and $X(w)$ tends to $+\infty$ simultaneously. The function $W(z) = U(z) + iV(z)$ will denote the inverse of $Z(w)$.





Now we are ready to formulate results from \cite{War} that we will use in the proofs of Theorems \ref{existence theorem} and \ref{regular upper bound}.
\begin{theorem}{\cite[Theorem III a]{War}}\label{Alf bound}
Let $w_1 = u_1 + iv_1, w_2 = u_2 + iv_2$ and $u_1 \le u_2$. Then 

\begin{equation}
\pi \intt_{u_1}^{u_2}\frac{du}{\theta(u)} \le x_2 - x_1 + 4\pi,
\end{equation}
where $x_k = X(w_k)$.
\end{theorem}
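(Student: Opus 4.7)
The plan is to prove this Ahlfors-type distortion estimate via the classical length-area method applied to the conformal map $Z: S \to S_0$, extracting the explicit additive constant $4\pi$ by carefully tracking the horizontal oscillation of $X$ on the two bounding cross-cuts.

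First I would use Cauchy--Schwarz on each vertical cross-cut $\gamma_u = \{u + iv : \phi_-(u) < v < \phi_+(u)\}$ of $S$. Because $Z$ sends the lateral boundaries $v = \phi_\pm(u)$ to the horizontal edges $y = \pm\pi/2$ of $S_0$, the image $\Gamma_u = Z(\gamma_u)$ is a cross-cut of $S_0$ joining these two edges and therefore has Euclidean length at least $\pi$. Writing this length as $\intt_{\phi_-(u)}^{\phi_+(u)} |Z'(u+iv)|\,dv$ and applying the Cauchy--Schwarz inequality against the $v$-interval of length $\theta(u)$, one obtains the pointwise bound
\begin{equation*}
\frac{\pi^2}{\theta(u)} \le \intt_{\phi_-(u)}^{\phi_+(u)} |Z'(u+iv)|^2\,dv.
\end{equation*}

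Second I would integrate in $u$ from $u_1$ to $u_2$. The right-hand side becomes $\int\int_{S^*} |Z'|^2\,du\,dv$ over $S^* = S \cap \{u_1 < \Re w < u_2\}$, which by conformality and injectivity of $Z$ equals the Euclidean area of $Z(S^*)$ inside $S_0$. Since $Z(S^*)$ sits in a horizontal strip of width $\pi$, this area is at most $\pi$ times its horizontal extent. Writing $a = \min_{\Gamma_{u_1}} X$ and $b = \max_{\Gamma_{u_2}} X$, disjointness of the family $\{\Gamma_u\}_{u}$ and the normalization that $\Re w$ and $X(w)$ tend to $+\infty$ together forces $Z(S^*) \subset \{a < X < b\}$, so
\begin{equation*}
\pi \intt_{u_1}^{u_2} \frac{du}{\theta(u)} \le b - a.
\end{equation*}

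The main obstacle is the last step: bounding the horizontal overhang of each cross-cut, that is showing $b - x_2 \le 2\pi$ and $x_1 - a \le 2\pi$, which together yield $b - a \le (x_2 - x_1) + 4\pi$ as claimed. I expect this to follow from a second, localized length-area argument near each endpoint: consider the sub-region of $S_0$ lying between $\Gamma_{u_k}$ and an extremal vertical line tangent to it, note that a similar Cauchy--Schwarz step applied to vertical cross-cuts of this sub-region again forces its area to be at least $\pi$ times its horizontal extent, and pit this against the $L^2$-budget of $|Z'|$ available on a thin neighborhood of $\gamma_{u_k}$. The essential geometric point is that a cross-cut of a width-$\pi$ strip cannot horizontally protrude more than $2\pi$ past any one of its points without violating the conformal area balance, and identifying that universal constant $2\pi$ is the delicate part that distinguishes Warschawski's sharp form from the naive Ahlfors inequality.
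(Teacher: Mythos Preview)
The paper does not prove this statement at all: it is quoted verbatim from Warschawski \cite[Theorem~III~a]{War} and used as a black box, so there is no ``paper's own proof'' to compare against.

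On the merits of your sketch: the first two steps are exactly the classical Ahlfors length--area argument and are correct. Cauchy--Schwarz on each vertical segment $\gamma_u$ together with $|\Gamma_u|\ge \pi$ gives $\pi^2/\theta(u)\le \int_{\phi_-}^{\phi_+}|Z'|^2\,dv$, and integrating in $u$ yields $\pi^2\int_{u_1}^{u_2}du/\theta(u)\le \mathrm{Area}\,Z(S^*)\le \pi(b-a)$ with $a=\min_{\Gamma_{u_1}}X$, $b=\max_{\Gamma_{u_2}}X$. So far so good.

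The third step, however, is where the content lies, and your outline does not work as written. The bald claim that ``a cross-cut of a width-$\pi$ strip cannot horizontally protrude more than $2\pi$ past any one of its points'' is false for an arbitrary cross-cut: nothing stops a Jordan arc joining $y=-\pi/2$ to $y=\pi/2$ from making a long horizontal excursion. What is true is much weaker and depends on the pair of cross-cuts together with the conformal structure, not on either cross-cut in isolation. Your proposed mechanism --- doing length--area on vertical slices of the overhang region and ``pitting it against the $L^2$-budget of $|Z'|$ on a thin neighbourhood of $\gamma_{u_k}$'' --- cannot produce a fixed constant: as the neighbourhood of $\gamma_{u_k}$ shrinks, the available $\iint|Z'|^2$ tends to zero, so you get no uniform bound. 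You also seem to have an inequality reversed (``area at least $\pi$ times horizontal extent'' should be ``at most''). The actual extraction of the constant $4\pi$ in Warschawski's proof uses a different balancing argument; if you want to reconstruct it, the key is to run the length--area inequality for the inverse map $W$ on the vertical segments $\ell_t\subset S_0$ with $x_2<t<b$, observing that each $W(\ell_t)$ must meet $\gamma_{u_2}$ and therefore has $u$-extent reaching $u_2$, and then to play the two area estimates against one another.
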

\begin{theorem}{\cite[Theorem IV a]{War}}
If $|\phi_{\pm}'(u)|\le m$, then for some $x_0$, for $x_0 \le x_1 \le x_2$ and $|y_{1, 2}| < \frac{\pi}{2}$ we have the bound
\begin{equation}\label{conf upper}
x_2 - x_1 \le \pi \intt_{u_1}^{u_2}  \frac{1 + \psi'^2(u)}{\theta(u)}du + \frac{\pi}{12}\intt_{u_1}^{u_2} \frac{\theta'^2(u)}{\theta(u)}du + 8\pi \left(1 + \frac{4}{3}m^2\right),
\end{equation}
where $u_{k} = U(x_k + iy_k)$.
\end{theorem}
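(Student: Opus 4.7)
My plan is to follow Warschawski's length-area (extremal length) strategy. By conformal invariance, the rectangular sub-strip $\{x_1 < \re w < x_2,\ |\im w| < \pi/2\} \subset S_0$ has extremal length $\pi/(x_2 - x_1)$ for the family of curves joining its top and bottom sides, and the $W$-image family $\Gamma$ in the curvilinear quadrilateral $Q \subset S$ (connecting $v = \phi_-(u)$ and $v = \phi_+(u)$) has the same extremal length. It therefore suffices to exhibit a metric $\rho$ on $Q$ admissible for $\Gamma$ (i.e.\ $\int_\gamma \rho\,ds \ge 1$ for every $\gamma \in \Gamma$) whose area $A(\rho) = \int_Q \rho^2\,du\,dv$ is small; the length-area inequality $\pi/(x_2 - x_1) \ge 1/A(\rho)$ will give $x_2 - x_1 \le \pi A(\rho)$.

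I would then perform the straightening $(u,v) \mapsto (u, \eta) := (u,\,(v - \psi(u))/\theta(u))$, which sends the body of $Q$ onto $(u_1, u_2) \times (-\tfrac12, \tfrac12)$ with Jacobian $\theta(u)$; the Euclidean line element becomes $ds^2 = du^2 + \big((\psi'(u) + \eta\theta'(u))\,du + \theta(u)\,d\eta\big)^2$. Setting $\rho(u,\eta) = \sqrt{1 + (\psi'(u) + \eta\theta'(u))^2}/\theta(u)$, completion of the square in $du/d\eta$ shows that $\rho\,ds \ge |d\eta|$ pointwise along any curve, so $\int_\gamma \rho\,ds \ge 1$ for every transverse curve. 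The area is
\begin{align*}
A(\rho) &= \int_{u_1}^{u_2}\!\int_{-1/2}^{1/2} \frac{1 + (\psi'(u) + \eta\theta'(u))^2}{\theta(u)^2}\,\theta(u)\,d\eta\,du \\
&= \int_{u_1}^{u_2}\frac{1 + \psi'(u)^2}{\theta(u)}\,du + \frac{1}{12}\int_{u_1}^{u_2}\frac{\theta'(u)^2}{\theta(u)}\,du,
\end{align*}
where the linear-in-$\eta$ cross-term vanishes and $\int_{-1/2}^{1/2}\eta^2\,d\eta = 1/12$. Multiplying by $\pi$ reproduces the two integrals on the right-hand side of the theorem.

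Finally, the additive constant $8\pi(1 + 4m^2/3)$ absorbs the endpoint correction: because $u_k = U(x_k + iy_k)$ for an arbitrary $y_k \in (-\pi/2, \pi/2)$, the two ``vertical'' sides of $Q$ are the $W$-images of the segments $\{\re w = x_k,\ |\im w| < \pi/2\}$, not the straight lines $\{u = u_k\}$, so $Q$ differs from the straightened rectangle by two small end-pieces. The Lipschitz bound $|\phi_\pm'| \le m$ then controls how far these arcs can drift in the $u$-direction, and the extra cost of extending $\rho$ admissibly across the end-pieces is $O(1 + m^2)$ uniformly in $(u_1, u_2)$. I expect this endpoint analysis to be the main technical obstacle: one needs a Koebe-type distortion estimate for $W$ near the corners of $Q$, or a comparison with the explicitly solvable parallel-strip model, to make the $O(1 + m^2)$ bound quantitative and to confirm the specific constants $8\pi$ and $32\pi/3$. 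Once that boundary estimate is set up, the remainder is the clean length-area computation above.
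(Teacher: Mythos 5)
This statement is not proved in the paper at all: it is quoted verbatim from Warschawski \cite[Theorem IV a]{War} and used as a black box, so there is no internal proof to compare against; the right benchmark is Warschawski's original length--area argument, which is essentially what you outline. Your core computation is correct. The extremal length of the family crossing the rectangle $(x_1,x_2)\times(-\pi/2,\pi/2)$ is $\pi/(x_2-x_1)$, conformal invariance transfers it to the image quadrilateral in $S$, the metric $\rho=\sqrt{1+(\psi'+\eta\theta')^2}/\theta$ is admissible by exactly the Cauchy--Schwarz step you indicate (with $X=du$, $Y=(\psi'+\eta\theta')\,du+\theta\,d\eta$ one has $(1+a^2)(X^2+Y^2)\ge (Y-aX)^2=\theta^2\,d\eta^2$), and the $\eta$-integration kills the cross term and produces the coefficient $1/12$, so $\pi A(\rho)$ reproduces the two integral terms on the nose.

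The genuine gap is the one you flag yourself, and it is not cosmetic. Since $u_k=U(x_k+iy_k)$ for a single $y_k$, the two ends of the quadrilateral are the $W$-images of the vertical segments $\{\re w=x_k\}$, not the lines $\{u=u_k\}$; a competing curve can leave the slab $u_1\le u\le u_2$ near the ends, so the metric as written is not admissible there, and extending it costs area. Bounding that extra area by $8\left(1+\frac{4}{3}m^2\right)$ requires a quantitative estimate on the oscillation of $U$ along the segments $\{\re w=x_k\}$ (a distortion or comparison-with-parallel-strip estimate), and this is also the only place the hypothesis $|\phi_\pm'|\le m$ and the threshold ``for some $x_0$'' enter --- your sketch never uses either, which is a sign the endpoint step is where all the remaining content sits. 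As it stands you have established the inequality with an unspecified additive $O(1+m^2)$ constant rather than the stated one. For the way the present paper uses the theorem (only the two integrals and the fact that the remainder is $O(1)$ matter) your argument captures everything needed; as a proof of the quoted statement it is incomplete at precisely the point you identify.
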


\begin{theorem}{\cite[Theorem VI]{War}}
If $|\phi_{\pm}'(u)| \to 0, u\to +\infty$ and ${|\phi_{\pm}''(u)|\in L^1(\R)}$, then for $w_1 = u_1 + iv_1$ and $w_2 = u_2 + iv_2$, $u_1 < u_2$, we have
\begin{equation}\label{conf lower}
x_2 - x_1 \ge \pi \intt_{u_1}^{u_2}  \frac{1 + \psi'^2(u)}{\theta(u)}du - \frac{\pi}{4}\intt_{u_1}^{u_2} \frac{\theta'^2(u)}{\theta(u)}du + o(1),
\end{equation}
where $x_k = X(w_k)$ and $o(1)$ is uniform as $u_1\to +\infty$.
\end{theorem}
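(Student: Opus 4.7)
The stated lower bound sharpens the classical Ahlfors length-area estimate (Theorem \ref{Alf bound}) by adding the first-order correction $\pi\intt\psi'^2/\theta$ for the tilt of the centerline and the second-order correction $-\tfrac{\pi}{4}\intt\theta'^2/\theta$ for the variable width. My plan is to follow the Ahlfors--Warschawski length-area strategy in the original strip $S$. For each $u\in[u_1,u_2]$ the vertical cross-section $\{u+iv\mid\phi_-(u)<v<\phi_+(u)\}$ has length $\theta(u)$, and its image under $Z=X+iY$ is a curve in $S_0$ joining the two horizontal boundaries $y=\pm\pi/2$, so
$$\pi = \intt_{\phi_-(u)}^{\phi_+(u)} Y_v(u,v)\, dv.$$
Cauchy--Schwarz gives $\pi^2 \le \theta(u) \intt Y_v^2\, dv$; integrating in $u$ and using conformality $|Z'|^2 = Y_u^2 + Y_v^2$ together with the area identity
$$\iint_{S\cap\{u_1<u<u_2\}}|Z'|^2\, du\, dv = \pi(x_2-x_1) + O(1)$$
immediately recovers the leading lower bound $x_2-x_1 \ge \pi \intt du/\theta - O(1)$, which is essentially Theorem \ref{Alf bound}.

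To sharpen this to the claimed inequality I would refine the estimate by subtracting off the correct mean on each slice. Introduce the explicit quasiconformal straightening $\tilde W:S_0\to S$ of the form $\tilde W(x+iy)=u(x)+i\bigl(\psi(u(x))+\theta(u(x))\,y/\pi\bigr)$, with $u(x)$ defined implicitly so that $\tilde W$ is an asymptotic isometry along the centerline. Pulling back the Euclidean metric on $S$ under $\tilde W$ yields a metric on $S_0$ whose Beltrami coefficient and conformal factor are controlled pointwise by $\psi'$ and $\theta'$; the deficit between the pulled-back Dirichlet energy and the flat-strip energy is what reproduces the two correction integrals $\pi\intt\psi'^2/\theta$ and $-\tfrac{\pi}{4}\intt\theta'^2/\theta$ in the claim.

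The technical heart is then to apply the Dirichlet (extremal length) extremality of the conformal modulus, combined with a single integration by parts that trades $\theta''$ for $\theta'^2$. The hypothesis $|\phi_\pm''|\in L^1(\R)$ ensures convergence and the absence of boundary contributions at infinity, while $\phi_\pm'(u)\to 0$ guarantees that the deformation tensor of $\tilde W$ decays, which in turn makes the $o(1)$ error uniform in $u_1$.

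I expect the main obstacle to be the precise constant $-\pi/4$: unlike the leading length-area bound, it is not a triangle-inequality or Cauchy--Schwarz loss but a sharp variational quantity, and its asymmetry with the constant $+\pi/12$ in the matching upper bound \eqref{conf upper} shows that the two directions behave genuinely differently. Tracking the correct sign and constant requires either Warschawski's careful harmonic analysis on $S_0$ or, alternatively, a reduction to a one-dimensional eigenvalue problem on a slightly perturbed strip; I would attempt the former first, following \cite{War} directly.
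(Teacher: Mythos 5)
This statement is not proved in the paper at all: it is quoted verbatim as Theorem VI of Warschawski \cite{War}, so there is no internal argument to compare yours against. Judged on its own terms, your sketch identifies the right general strategy --- the length--area/Dirichlet-principle framework with an explicit comparison map $x+iy\mapsto u(x)+i\bigl(\psi(u(x))+\theta(u(x))y/\pi\bigr)$ --- and your first paragraph correctly recovers the leading term $\pi\int du/\theta$, i.e.\ Theorem \ref{Alf bound}. But as written this is a plan, not a proof. The entire content of the theorem is in the second-order terms, and you explicitly defer the two steps that carry that content: (a) the computation of the Dirichlet-energy deficit of the straightening map that produces $+\pi\int\psi'^2/\theta$ and $-\tfrac{\pi}{4}\int\theta'^2/\theta$ with those exact constants, and (b) the verification that the error is $o(1)$ \emph{uniformly} in $u_1$, which is precisely where the hypotheses $\phi_\pm'\to 0$ and $\phi_\pm''\in L^1$ must be used (Warschawski's own route goes through asymptotics for $|Z'(w)|$ and integration along the level curves $L_y$, not a bare extremality statement). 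Your closing remark that you ``would attempt the former first, following \cite{War} directly'' concedes the gap.

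There is also a directional issue you should not wave away: Dirichlet extremality of the conformal map gives a one-sided comparison, and the side you get depends on whether you test with an admissible metric (lower bound for extremal length) or with a competitor map (upper bound for energy). The upper bound \eqref{conf upper} comes from the easy direction; the lower bound \eqref{conf lower} is the hard direction, and a comparison map alone does not obviously yield it --- this is exactly why the constants $+\pi/12$ and $-\pi/4$ differ. Until you either carry out Warschawski's derivative asymptotics or produce a genuine two-sided modulus estimate with the stated constants, the inequality \eqref{conf lower} remains unproved by your argument.
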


\begin{theorem}{\cite[Theorem X iii]{War}}\label{arg asymp}
If $|\phi_{\pm}'(u)| \to 0, u\to +\infty$, then for every $y, |y| < \frac{\pi}{2}$, the line $\Lambda_y = \{ z\mid \Im z = y\}$ is  mapped by $Z$ onto a curve $L_y$ which for big enough $u$ is represented by the equation
\begin{equation}
v = f_y(u) = \psi(u) + \frac{\theta(u)}{\pi}y + o(\theta(u)), u\to +\infty.
\end{equation}
\end{theorem}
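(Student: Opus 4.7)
The plan is a rescaling and normal families argument that reduces the slowly varying strip $S$ locally to a standard strip of constant width. Fix any sequence $u_n \to +\infty$; it suffices to prove the asymptotic along such a sequence. Set $w_n = u_n + i\psi(u_n)$ and define the affine map $\tau_n(w) = (w - w_n)/\theta(u_n)$, which sends the midpoint $w_n$ to $0$ and rescales distances by $1/\theta(u_n)$. Put $\tilde S_n = \tau_n(S)$.

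First I would show that $\tilde S_n$ converges to the standard unit strip $\tilde S_\infty = \{|\Im z| < 1/2\}$ in the Carath\'eodory sense about the origin. The upper boundary of $\tilde S_n$ is the graph of
\[
\tilde\phi_+^n(x) = \frac{\phi_+(u_n + \theta(u_n) x) - \psi(u_n)}{\theta(u_n)} = \frac{1}{2} + \frac{1}{\theta(u_n)}\intt_0^{\theta(u_n) x} \phi_+'(u_n + t)\, dt.
\]
Since $\phi_+'(u) \to 0$ and $\theta(u) = o(u)$ (which follows by integration from $\theta'(u) \to 0$), the integral on the right is $o(\theta(u_n))$ uniformly in $x$ on any fixed compact set, so $\tilde\phi_+^n \to 1/2$ locally uniformly; similarly the lower boundary of $\tilde S_n$ tends to $-1/2$.

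Next, the harmonic function $\tilde Y_n(z) := \Im Z(\tau_n^{-1}(z))$ on $\tilde S_n$ equals $\pi/2$ on its top boundary, $-\pi/2$ on its bottom, and is uniformly bounded by $\pi/2$. By Harnack's principle the family $\{\tilde Y_n\}$ is normal on any compact subset of $\tilde S_\infty$, and a diagonal extraction produces a locally uniform subsequential limit $\tilde Y_\infty$, harmonic on $\tilde S_\infty$. Combined with the boundary convergence above and a barrier argument (comparing $\tilde Y_n$ from above and below against the explicit harmonic functions on slightly wider and narrower constant-width strips), this identifies the boundary values of $\tilde Y_\infty$ as $\pm\pi/2$, forcing $\tilde Y_\infty(x + iy) = \pi y$ by uniqueness of the bounded Dirichlet solution on $\tilde S_\infty$. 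Thus the full sequence $\tilde Y_n \to \pi \Im z$ locally uniformly. Reading this off at $x = 0$, the unique $v_n$ with $\tilde Y_n(iv_n) = y$ satisfies $v_n \to y/\pi$; translating back yields $f_y(u_n) = \psi(u_n) + \theta(u_n) v_n = \psi(u_n) + \frac{\theta(u_n)}{\pi}y + o(\theta(u_n))$, as required.

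The main technical obstacle is the boundary identification in the passage to the limit $\tilde Y_n \to \tilde Y_\infty$: since only $\phi_\pm'(u) \to 0$ is assumed, with no control on higher derivatives, one cannot invoke any Lindel\"of-type boundary regularity directly and must carefully construct two-sided barriers from the convergence $\tilde\phi_\pm^n \to \pm 1/2$. Once the limit harmonic function is pinned down, the desired asymptotic follows immediately from the explicit form $\tilde Y_\infty(x + iy) = \pi y$.
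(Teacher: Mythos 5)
This is not a statement the paper proves at all: it is quoted verbatim from Warschawski \cite[Theorem X iii]{War}, whose own proof proceeds through explicit integral (length--area/Ahlfors-type) estimates for the mapping function rather than compactness. Your rescaling-and-normal-families argument is therefore a genuinely different, more modern route, and its skeleton is sound: the blow-up $\tau_n$ is correctly normalized (the computation $\tilde\phi_\pm^n\to\pm\tfrac12$ locally uniformly does follow from $\phi_\pm'\to0$ together with $\theta(u)=o(u)$), the family $\{\tilde Y_n\}$ is normal, and the limit is identified as $\pi\,\Im z$ by the long-rectangle comparison you sketch (the error contributed by the two vertical ends of $\tilde S_n\cap\{|\Re z|<R\}$ has harmonic measure $O(e^{-\pi R})$ at points of a fixed compact set, and the top/bottom boundary data differ from $\pi\,\Im z$ by $o(1)$). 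What this buys over Warschawski is conceptual transparency; what it loses is any rate in the $o(\theta(u))$, which Warschawski's quantitative method provides but which the present paper does not need.

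Two points, however, are not covered by what you wrote. First, ``reading off at $x=0$'' only controls solutions $v_n$ of $\tilde Y_n(iv_n)=y$ that stay in a fixed compact subinterval of $(-\tfrac12,\tfrac12)$; locally uniform convergence on compacta says nothing about possible solutions with $v_n\to\pm\tfrac12$. You must separately exclude these, e.g.\ by a Beurling-type estimate showing $\tfrac{\pi}{2}-\tilde Y_n(iv)\le C\sqrt{\tfrac12-v+o(1)}$ near the upper boundary, so that $\tilde Y_n(iv)>y$ there; only then is every point of $L_y\cap\{\Re w=u_n\}$ within $o(\theta(u_n))$ of the predicted location. Second, and more substantively, the theorem asserts that $L_y$ \emph{is represented by an equation} $v=f_y(u)$ for large $u$, i.e.\ that $u\mapsto \Re W(x+iy)$ is eventually injective along $\Lambda_y$. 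Your argument localizes $L_y\cap\{\Re w=u\}$ but does not show this set is a single point; that requires some control of $W'$ (in Warschawski's paper it comes from the derivative asymptotics in the other parts of Theorem X), or at least an argument that the analytic arc $L_y$ cannot oscillate back and forth in $u$. For the use made of this theorem in the proof of Theorem \ref{existence theorem} only the localization matters, but as a proof of the stated result the single-valuedness step is missing.
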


\begin{proof}[Proof of Theorem \ref{existence theorem}]
We will construct a function $f$ in two steps. First of all, we will construct an unbounded analytic function $g:\bar{\Omega}\to \Cm$ of finite exponential type which is bounded on $\partial \Omega$ for some suitable domain $\Omega$. Then we will construct function $f$ as a Cauchy integral of $g$ over the boundary of $\Omega$ and show that $f$ is bounded in $\Omega_s$.

 Put $\phi_-(u) = h(u) + \pi + \frac{1}{u^2 + 1}, \phi_+(u) = h(u) + 2\pi - \frac{1}{u^2+1}$. Note that  $\theta(u) = \pi - \frac{2}{u^2+1}$ and $\psi(u)   = h(u) + \frac{3\pi}{2}$.
 
 Let $x_0 = x_1 \le x_2$ and consider estimate \eqref{conf upper}

$$x_2 - x_1 \le \pi \intt_{u_1}^{u_2} \frac{du}{\theta(u)} + \intt_{u_1}^{u_2} \frac{\psi'(u)^2}{\theta(u)}du + \frac{\pi}{12}\intt_{u_1}^{u_2}\frac{\theta'(u)^2}{\theta(u)}du + O(1) = I_1 + I_2 + I_3 + O(1).$$

Direct computation shows that $I_1 + I_3 = u_2 - u_1 + O(1)$. Since $\int h'(u)^2du < \infty$ and $\theta(u) \ge \pi - 2$, we have $I_2 = O(1)$. Therefore we get
\begin{equation}\label{real bound}
x_2 \le u_2 - u_1 + x_1 + O(1) = u_2 + O(1).
\end{equation}

Put $e(z) = \exp(\exp(z)), z\in \bar{S_0}$. Note that $e$ is bounded on $\partial S_0$ and unbounded on $S_0$. Consider the function $b(w) = e(Z(w)), w\in \bar{S}$. This function is bounded on $\partial S$ and from the estimate \eqref{real bound} we get ${|b(u + iv)| \le \exp(C\exp(u))}$.

Put $\Omega = \{ re^{i\alpha}\mid \phi_-(\log r) < \alpha < \phi_+(\log r), r > 10\}$ and consider the function $g(z) = \frac{b(\log z)}{z^2}, z\in \bar \Omega$. We have $|g(z)| = O\left(\frac{1}{|z|^2}\right), z\in \partial \Omega$ and $|g(z)| \le \exp(C|z|), z\in \Omega$.

For  $z\in \Cm \backslash \bar \Omega$ put 

$$f(z) = \frac{1}{2\pi i}\intt_{\partial \Omega} \frac{g(w)}{z-w}dw.$$

\sloppy Since $|\phi_{\pm}'|$ are bounded, the  intersection of $\partial \Omega$ with the annulus ${\{ n < |z| < n+1\}}$ has length $O(1)$. Therefore this integral converges absolutely.

Note that by shifting contour  from $\partial \Omega$ to $(\partial \Omega \cap \Cm \backslash R\D) \cup (\partial R\D \cap \Omega)$ we can analytically extend $f$ to the disk $R\D$. Since $R$ can be chosen arbitrary large the function $f$ has an analytic extension to the whole plane. Moreover, choosing $R = 2|z|$ we obtain $|f(z)| \le 10\max\limits_{|w| = R} |g(w)| + O(1)$. Since ${|g(w)| \le \exp(C|w|)}$, the function $f$ is of finite exponential type. It remains to prove that $f$ is nonconstant and bounded in $\Omega_s$.

First of all we will prove that $\dist(\Omega, \Omega_s) = \eps > 0$. Note that $$\Omega = \left\{ re^{i\alpha} \mid s(r) + \pi + \frac{1}{\log^2 r + 1} < \alpha < s(r) + 2\pi - \frac{1}{\log^2 r + 1}, r > 10\right\},$$ therefore $\Omega \cap \Omega_s = \varnothing$.

Suppose that $z\in \Omega, w\in \Omega_s, |z - w| < \delta$. Then $||z| - |w|| < \delta$. Hence, 

 $$|\arg(z) - \arg(w)| \ge \frac{1}{\log^2 |z| + 1} - \delta\max_{|z| - \delta < r< |z| + \delta}s'(r).$$ From the boundedness of $h'(u)$ it follows that $s'(r) = O\left(\frac{1}{r}\right)$. Hence for small enough $\delta$ the arguments of $z$ and $w$ differ by at least $\frac{1}{2(\log^2 |z| + 1)}$. Thus $|z - w| \ge \frac{|z|}{20(\log^2 |z| + 1)} > \delta$ for small enough $\delta$.  We arrive at a contradiction.

Therefore for $z\in \Omega_s$, $|f(z)| \le \frac{1}{2\pi \eps} \intt_{\partial \Omega} |g(w)||dw| = C < \infty$.

To prove that $f$ is nonconstant we first note the following identity for $z\in \Omega$:
\begin{equation}\label{int plus}
f(z) = g(z) + \frac{1}{2\pi i}\intt_{\partial \Omega} \frac{g(w)}{z-w}dw.
\end{equation}

{Indeed, when the contour passes through the  point $z$ the residue at $z$ adds to the integral which can be easily seen to be equal to $g(z)$.}

Consider the line $\Lambda_0 = \R \subset S_0$. From Theorem \ref{arg asymp} under the mapping  $z\mapsto \exp(W(z))$ it is mapped onto the curve 

$$\Gamma = \{ re^{i\beta}, \beta = t(r) = s(r) + \frac{3\pi}{2} + o(1)\}.$$ 

{ Reasoning as before we can see that $dist(\Gamma, \partial \Omega) > 0$. Therefore the  integral in the \eqref{int plus} is $O(1)$.}

From Theorem \ref{Alf bound}  and definition of $e(z)$ we can see that ${|g(z)| \ge \frac{\exp(c|z|)}{|z|^2}}, z\in \Gamma$, for some $c > 0$. So $g(z)$ is unbounded on $\Gamma$. Therefore $f(z)$ is unbounded on $\Gamma$ as well and thus is nonconstant.
\end{proof}

\begin{remark}
{\normalfont It seems plausible that Theorem \ref{existence theorem} can be proved assuming only boundedness of $h'(x)$ instead of $h'(x)\to 0$, by modifying methods from \cite{War}. The only step where we used the more restrictive hypothesis is when we proved that $f$ is nonconstant and there we had a very big degree of freedom.}
\end{remark}
We will now proof a slightly weaker form of Theorem \ref{regular upper bound}.

\begin{theorem}\label{regular upper bound 2}
Let $\Omega_s$ be a rotating half-plane with rotation function $s$ and let $f$ be an entire function of finite exponential type bounded on $\Omega_s$. Assume that $s$ does not satisfy \eqref{sqint}, $h'(x) \to 0$ as $x\to \infty$ and 

$$\intt_0^\infty |h''(x)| dx < \infty.$$
Then $f$ is constant.
\end{theorem}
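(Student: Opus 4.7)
The plan is to transfer $f$ to the upper half-plane via a conformal map built from the logarithm and a Warschawski straightening, then apply the classical half-plane Phragmén--Lindelöf principle, with the lower estimate \eqref{conf lower} used to control the growth of the transplanted function along the imaginary axis. Put $M=\sup_{\Omega_s}|f|$ and let
\begin{equation*}
D=\{re^{i\alpha} : r>0,\ s(r)+\pi<\alpha<s(r)+2\pi\}
\end{equation*}
be the opposite rotating half-plane; $D$ is simply connected with $\partial D=\partial\Omega_s$, and $|f|\le M$ on $\partial D$ by continuity. Choosing the branch of the logarithm that sends $z=re^{i\alpha}\in D$ to $\log r+i\alpha$, $\log D$ is the twisted strip
\begin{equation*}
T=\{u+iv : h(u)+\pi<v<h(u)+2\pi\},
\end{equation*}
so $\theta(u)\equiv\pi$, $\theta'\equiv 0$, $\psi'(u)=h'(u)$, and the hypotheses $h'(u)\to 0$, $h''\in L^1(\R)$ put us in the setting of \cite[Theorems III a and VI]{War}. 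Let $Z=X+iY\colon T\to S^{*}=\{|\Im\zeta|<\pi/2\}$ be the conformal map normalised so that $\Re w\to+\infty$ iff $X\to+\infty$, and define $\Phi(z):=i\exp(Z(\log z))$; then $\Phi\colon D\to\mathbb{H}:=\{\Im\xi>0\}$ is a biholomorphism with $|\Phi(z)|=e^{X(\log z)}$.

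Applying \eqref{conf lower} with $\theta\equiv\pi$, $\theta'\equiv 0$, $\psi'=h'$, and choosing $u_1$ large enough that the $o(1)$ does not exceed $1$, for $u_2>u_1$ and all admissible imaginary parts we obtain
\begin{equation*}
X(w_2)-X(w_1)\ \ge\ (u_2-u_1)+\intt_{u_1}^{u_2} h'(t)^2\,dt-1.
\end{equation*}
The hypothesis $\intt_0^\infty h'(t)^2\,dt=\infty$ then gives $X(\log z)-\log|z|\to+\infty$ as $|z|\to\infty$ in $D$, or equivalently
\begin{equation*}
\frac{|\Phi^{-1}(\xi)|}{|\xi|}\ \longrightarrow\ 0\quad\text{as } |\xi|\to\infty \text{ in } \mathbb{H}.
\end{equation*}

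Form $F:=f\circ\Phi^{-1}$ on $\mathbb{H}$. It is holomorphic on $\mathbb{H}$, continuous on $\overline{\mathbb{H}}$, and $|F|\le M$ on $\partial\mathbb{H}=\R$. Since $f$ has exponential type $\sigma$, $|F(\xi)|\le Ce^{\sigma|\Phi^{-1}(\xi)|}\le Ce^{\sigma|\xi|}$ for $|\xi|$ large, so $F$ is of finite exponential type in $\mathbb{H}$, and
\begin{equation*}
\limsup_{y\to\infty}\frac{\log|F(iy)|}{y}\ \le\ \sigma\limsup_{y\to\infty}\frac{|\Phi^{-1}(iy)|}{y}\ =\ 0.
\end{equation*}
The classical Phragmén--Lindelöf principle for the upper half-plane---a function of finite exponential type in $\mathbb{H}$, bounded by $M$ on $\R$, with $\limsup_{y\to\infty}\log|F(iy)|/y\le 0$ is bounded by $M$ on all of $\mathbb{H}$---now yields $|F|\le M$ on $\mathbb{H}$, hence $|f|\le M$ on $D$. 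Combined with the bound on $\Omega_s$, $f$ is bounded on $\Cm$, so Liouville's theorem forces $f$ to be constant.

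The principal technical point is the uniformity in $\Im w$ of the Warschawski lower bound \eqref{conf lower}, which follows from the uniformity of the $o(1)$ in \cite[Theorem VI]{War} once $u_1$ is fixed large enough; everything else is routine conformal-map bookkeeping and classical Phragmén--Lindelöf.
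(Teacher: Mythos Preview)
Your proof is correct and follows essentially the same route as the paper's: pass to the complementary rotating half-plane, straighten via $\log$ and Warschawski's map $Z$, use the lower bound \eqref{conf lower} (with $\theta\equiv\pi$) to show the transplanted function has sub-exponential growth in the target half-plane, then apply Phragm\'en--Lindel\"of and Liouville. One small simplification: since you already prove $|\Phi^{-1}(\xi)|/|\xi|\to 0$ for \emph{all} $\xi\to\infty$ in $\mathbb{H}$, your $F$ is in fact of minimal type there, so the standard half-plane Phragm\'en--Lindel\"of (as cited in the paper, \cite[Lecture~6, Theorem~3]{Levin}) applies directly without the extra $\limsup$ condition along $i\R_+$.
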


\begin{proof}
Let $S = \{ u + iv\mid h(u) + \pi < v < h(u) + 2\pi\}$, that is $\theta(u) = \pi$, $\psi(u) = h(u) + \frac{3\pi}{2}$.

The function $g(z) = f(\exp(W(z)))$ is analytic on $S_0$ and bounded on $\partial S_0$. Fix some big $u_1$ and consider estimate \eqref{conf lower}, 

$$x_2 - x_1 \ge u_2 - u_1 + \intt_{u_1}^{u_2} h'^2(u)du + O(1) = u_2 + \intt_0^{u_2}h'^2(u)du + O(1).$$

Since the function $f$ is of finite exponential type, we have

$$|g(x + iy)| \le C_1\exp(C_2\exp(U(z))) \le C_1\exp(C_2\exp(x_2 - \intt_0^{U(z)}h'(u)^2du + C_3)).$$

\sloppy Note that since the map $W$ is conformal and sends infinity to infinity, we have  $U(x+iy)\to +\infty$ as $x\to +\infty$. Therefore  $|g(x+iy)| = o(\exp(c\exp(x)))$ for every $c > 0$ as $x\to +\infty$. Thus, the function $b(z) = g(\log z), {\Re z \ge 0}$, is bounded on $\{ \Re z = 0\}$ and $b(z) = o(\exp(c|z|)), |z|\to \infty$. From the  Phragm{\'e}n-Lindel{\"o}f Theorem (see, e.g., \cite[Lecture 6, Theorem 3]{Levin}) it follows that $b$ is bounded. Therefore $f$ is bounded on $\Cm \backslash \bar\Omega_s$. Since by assumption $f$ is bounded on $\Omega_s$  we get that $f$ is constant by Liouville's Theorem.
\end{proof}

Now we will derive Theorem \ref{regular upper bound} from Theorem \ref{regular upper bound 2} and Theorem \ref{upper boundintr} (the proof of which we postpone to the next section).

\begin{proof}[Proof of Theorem \ref{regular upper bound}]

Since $h''(u) \in L^1(\R)$, there exists $\lim_{u\to \infty} h'(u) = H$.
 If $H = 0$, then we may apply Theorem \ref{regular upper bound 2} to conclude that $f$ is constant.

If $H \ne 0$ then we have $h(u) = (H + o(1))u \ne o(\sqrt{u})$. Therefore we can apply Theorem \ref{upper boundintr} and again conclude that $f$ is constant.
\end{proof}}
\subsection{Proof of Theorem \ref{upper boundintr}}
We will need the following estimate for harmonic measure in bounded simply connected domains:

\begin{theorem}[{\cite[Theorem 5.3]{harmonic measure}}]\label{measure estimate} Let $G$ be a simply connected domain bounded by a rectifiable Jordan curve and let $E\subset \partial G$ be an arc. If $\sigma$ is any curve connecting point $z_0\in G$ and $\partial G \backslash E$ which lies entirely inside of $G$, then
\begin{equation}\label{estimate}
\omega(z_0, E, G) \le \frac{8}{\pi}e^{-\pi \lambda(\sigma, E, G)},
\end{equation}
where $\omega(z_0, E, G)$ is the harmonic measure at $z_0$ of $E$ in $G$ and  ${\lambda(\sigma, E, G) = \frac{dist(\sigma, E)^2}{A(G)}}$, where $A(G)$ is the area of $G$ and $dist(\sigma, E)$ is the infimum of the lengths of paths which connect points on $\sigma$ with $E$ and lie entirely inside of $G$.
\end{theorem}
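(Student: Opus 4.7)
The plan is to deduce the estimate from the classical length--area principle for harmonic measure combined with a trivial lower bound on extremal length. Let $\Gamma$ denote the family of locally rectifiable curves in $G$ joining $\sigma$ and $E$, and recall that its extremal length is
$$
\lambda(\Gamma)=\sup_{\rho}\frac{\bigl(\inf_{\gamma\in\Gamma}\int_\gamma\rho\,|dz|\bigr)^2}{\int_G\rho^2\,dA},
$$
where $\rho$ ranges over nonnegative Borel measurable densities on $G$. Inserting the constant density $\rho\equiv 1$ gives immediately
$$
\lambda(\Gamma)\ge\frac{\dist(\sigma,E)^2}{A(G)}=\lambda(\sigma,E,G),
$$
since by hypothesis every $\gamma\in\Gamma$ lies inside $G$ and hence has Euclidean length at least $\dist(\sigma,E)$. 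It therefore suffices to prove the conformally invariant inequality $\omega(z_0,E,G)\le\frac{8}{\pi}e^{-\pi\lambda(\Gamma)}$.

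For this analytic core I would transport the problem to the disk. Let $\Phi:\mathbb{D}\to G$ be a Riemann map with $\Phi(0)=z_0$; since $\partial G$ is a rectifiable Jordan curve, $\Phi$ extends continuously to $\overline{\mathbb{D}}$. Set $E'=\Phi^{-1}(E)$ and $\sigma'=\Phi^{-1}(\sigma)$. Harmonic measure is conformally invariant, so $\omega(z_0,E,G)=|E'|/(2\pi)$, and the extremal length of the pre-image family $\Gamma'$ of curves joining $\sigma'$ and $E'$ inside $\mathbb{D}$ equals $\lambda(\Gamma)$. The task is thereby reduced to showing $|E'|\le 16\,e^{-\pi\lambda(\Gamma')}$ in the unit disk.

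The key tool at this stage is Beurling's radial projection (symmetrization) theorem: replacing $\sigma'$ by a radial segment from $0$ to a boundary point and collapsing $E'$ to an arc centered at the diametrically opposite point can only increase $\omega(0,E',\mathbb{D})$ while decreasing $\lambda(\Gamma')$. It therefore suffices to prove the inequality in this extremal symmetric configuration, where both quantities can be computed explicitly: a Joukowski-type map sends the slit disk to a rectangle whose extremal length is a straightforward logarithm of a cross-ratio, and integrating the Poisson kernel over the symmetric arc yields the sharp constant $\tfrac{8}{\pi}$.

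The main obstacle is the Beurling projection step, which is classical but delicate; its justification rests on monotonicity of the Dirichlet integral under Steiner-type symmetrization together with the preservation of the boundary roles played by $E$ and $\sigma$. Once this reduction to the symmetric configuration is accepted, the elementary disk computation in that configuration combines with the trivial lower bound $\lambda(\Gamma)\ge\lambda(\sigma,E,G)$ from the first paragraph to produce the stated exponential decay with the asserted constant $\tfrac{8}{\pi}$.
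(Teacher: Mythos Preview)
The paper does not prove this statement; it is quoted verbatim as \cite[Theorem~5.3]{harmonic measure} and the only remark supplied is that the general result in Garnett--Marshall allows an arbitrary metric $\rho$, and that the version stated here is the special case $\rho\equiv 1$. Your first paragraph reproduces exactly this observation: inserting $\rho\equiv 1$ into the extremal-length definition gives $\lambda(\Gamma)\ge \dist(\sigma,E)^2/A(G)$, which is the sole step the paper claims for itself. In that sense your proposal and the paper agree perfectly on the part the paper actually addresses.

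Where you go further is in sketching a proof of the underlying inequality $\omega(z_0,E,G)\le\frac{8}{\pi}e^{-\pi\lambda(\Gamma)}$, which the paper simply outsources to the reference. Your outline---conformal transplant to $\mathbb{D}$, Beurling-type symmetrization to a radially symmetric configuration, explicit computation there---is indeed the route taken in Garnett--Marshall, so it is appropriate. Be aware, though, that as written it is only a sketch: the symmetrization step (that replacing $\sigma'$ by a radius and $E'$ by the opposite arc simultaneously increases harmonic measure and decreases extremal length) is the substantive content and you have not argued it, only named it. For the purposes of this paper that is fine, since the author never intended to reprove the cited result; but if you were asked to supply a self-contained proof, the Beurling projection inequality is where the real work lies.
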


	{ In Theorem 5.3 from \cite{harmonic measure} we can vary not only $\sigma$ but also the metric $\rho$. Theorem \ref{measure estimate} is a special case with $\rho(z) = 1$.}
	
Put $h(x) = s(\exp(x)) + \pi$, ${V = \{ a + bi \mid a > 0, h(a) < b < h(a) + \pi \}}$. The function $g(z) = f(e^z)$ is defined on $\bar{V}$, bounded on $\partial V$ and  satisfies the estimate $|g(a + bi)| \le \exp(A + C\exp(a))$ for some ${A, C > 0}$.
Without loss of generality we may assume that ${|g(z)| \le 1}$ on $\partial V$. 
{Put ${G_t = V \cap \{ \Re z < t\}}$, ${E_t = [t + h(t)i, t + (h(t) + \pi)i]}$ for some ${t\in \R}$ and consider function ${u(z) = C\exp(t)\omega(z, E_t, G_t)}$. Obviously, we have ${A + u(z) \ge \log |g(z)|,\ z\in \partial G_t}$. Since $\log |g(z)|$ is a subharmonic function, this estimate holds for $z\in G_t$ as well. Thus, if for all $z_0 = a_0 + b_0i\in V$  we can prove that $u(z_0) \to 0,\ t\to \infty$, then we would have $|g(z_0)| \le \exp(A)$, from which it follows that $f$ is bounded on $\Cm\backslash \Omega_s$. Since $f$ is also bounded on $\Omega_s$, by Liouville's Theorem $f$ is constant. So it remains to prove that $u(z_0) \to 0,\ t\to \infty$.

Put $\sigma = [a_0 + h(a_0)i, a + bi]$. By the assumption of Theorem \ref{upper boundintr}, there exists a sequence of numbers $t_k$ tending to infinity such that $h(t_k) \ge c\sqrt{t_k}$ for some $c > 0$. Moreover, we may assume that $\frac{h(t_{k + 1})}{2} > h(t_k) + \pi + \frac{c^2}{8}$ and that $t_1 > a_0$. We have
\begin{multline*}
dist(\sigma, E) \ge (t - t_{n+1})+\summ_{k = 1}^n \sqrt{ (t_{k + 1} -t_k)^2 + (h(t_{k + 1}) - h(t_k) - \pi)^2} \ge \\ 
(t - t_{n+1})+\sum_{k = 1}^n \sqrt{ (t_{k + 1} - t_k)^2 + \frac{c^2}{4}t_{k + 1} + \frac{c^4}{64}} \ge  \\ (t - t_{n+1})+\sum_{k = 1}^n \left(t_{k + 1} - t_k + \frac{c^2}{8}\right) = t - t_1 + \frac{c^2n}{8},
\end{multline*}

where $n + 1$ is the number of $t_k$'s not greater than $t$. Using \eqref{estimate} we get

\begin{multline}\label{bound}
u(z_0) \le \frac{8C}{\pi}\exp \left(t - \pi \frac{(t - t_1 + \frac{c^2n}{8})^2}{t\pi}\right) \le \\ \frac{8C}{\pi}\exp\left(2\pi\left(t_1 - \frac{c^2n}{8}\right)\right).
\end{multline}
Obviously, when $n$ tends to infinity the right-hand side of \eqref{bound} tends to $0$, and since $n\to \infty$ as $t\to \infty$  we have that $u(z_0)\to 0$ as $t\to \infty$.
}
\qed
\normalsize

\end{document}